\newtheorem{theorem}{Theorem}[section]
\newtheorem{lemma}[theorem]{Lemma}
\theoremstyle{definition}
\newtheorem{definition}[theorem]{Definition}
\newtheorem{example}[theorem]{Example}
\newtheorem{question}[theorem]{Question}
\def \kbar {\overline{k}}
\def \Qbar {\overline{\mathbb{Q}}}
\def \O {\mathcal{O}}
\DeclareMathOperator{\Sym}{Sym}
\DeclareMathOperator{\Gal}{Gal}
\DeclareMathOperator{\Supp}{Supp}
\DeclareMathOperator{\Jac}{Jac}
\DeclareMathOperator{\rk}{rank}
\DeclareMathOperator{\dv}{div}
\DeclareMathOperator{\Div}{Div}
\DeclareMathOperator{\codim}{codim}
\begin{document}
\bibliographystyle{amsplain}
\title{Integral points of bounded degree on affine curves}
\author{Aaron Levin}
\address{Department of Mathematics\\Michigan State University\\East Lansing, MI 48824}
\curraddr{}
\email{adlevin@math.msu.edu}
\begin{abstract}
We generalize Siegel's theorem on integral points on affine curves to integral points of bounded degree, giving a complete characterization of affine curves with infinitely many integral points of degree $d$ or less over some number field.  Generalizing Picard's theorem, we prove an analogous result characterizing complex affine curves admitting a nonconstant holomorphic map from a degree $d$ (or less) analytic cover of $\mathbb{C}$.
\end{abstract}

\maketitle

\section{Introduction}

Let $C\subset \mathbb{A}^n$ be a nonsingular affine curve defined over a number field $k$ and let $\tilde{C}$ be a nonsingular projective closure of $C$.  Let $S$ be a finite set of places of $k$ containing the archimedean places.  Siegel's classic theorem \cite{Sie} states that the set of $S$-integral points $C(\O_{k,S})=C(k)\cap \mathbb{A}^n(\O_{k,S})$ is finite if either $\tilde{C}$ has positive genus or $C$ has more than two points at infinity (i.e., $\#(\tilde{C}\setminus C)(\kbar)>2$).  Conversely, if $\tilde{C}$ has genus zero and $C$ has two or fewer points at infinity, then for some finite extension $L$ of $k$ and some finite set of places $S$ of $L$, the set of $S$-integral points $C(\O_{L,S})$ will be infinite.  In this case, $C$ is isomorphic, over $\kbar$, to either $\mathbb{A}^1$ or $\mathbb{G}_m=\mathbb{P}^1\setminus \{0,\infty\}=\mathbb{A}^1\setminus \{0\}$.  Since $\mathbb{G}_m\subset \mathbb{A}^1$, we can state Siegel's theorem as follows.

\begin{theorem}[Siegel's theorem]
\label{Sieg}
Let $C\subset \mathbb{A}^n$ be a nonsingular affine curve defined over a number field $k$.  Then there exists a finite extension $L$ of $k$ and a finite set of places $S$ of $L$ such that the set $C(\O_{L,S})$ is infinite if and only if there exists $C'\subset C$ with $C'$ isomorphic, over $\kbar$, to $\mathbb{G}_m$.
\end{theorem}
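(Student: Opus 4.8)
The plan is to read this statement not as a new result but as a repackaging of the two halves of the classical Siegel theorem recalled just above: the implication producing points is the elementary converse direction, while the implication extracting a subcurve combines the deep finiteness half with the geometric classification of genus-zero curves having few points at infinity.

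For the implication that a subcurve $C' \cong \mathbb{G}_m$ forces infinitely many integral points, I would first descend. Choose a finite extension $L/k$ over which $C'$, the inclusion $C' \subseteq C$, and a fixed isomorphism $\phi \colon C' \to \mathbb{G}_m$ are all defined. Writing $t$ for the standard coordinate on $\mathbb{G}_m$, so that its coordinate ring is $L[t,t^{-1}]$, each affine coordinate function $x_i$ of $\mathbb{A}^n$ restricts on $C'$ to a Laurent polynomial $x_i = \sum_j a_{ij} t^j$ with $a_{ij} \in L$. I would then enlarge $S$ so that it contains the archimedean places, every place at which some $a_{ij}$ fails to be integral, and enough further places that $\O_{L,S}^*$ is infinite. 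For each $S$-unit $u$, the point $\phi^{-1}(u) \in C'(L) \subseteq C(L)$ has coordinates $x_i = \sum_j a_{ij} u^j \in \O_{L,S}$, hence lies in $C(\O_{L,S})$; since Dirichlet's $S$-unit theorem supplies infinitely many such $u$ and $\phi^{-1}$ is injective, $C(\O_{L,S})$ is infinite.

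For the reverse implication, suppose $C(\O_{L,S})$ is infinite for some $L$ and $S$. Both the genus of $\tilde{C}$ and the number $\#(\tilde{C} \setminus C)(\kbar)$ of geometric points at infinity are geometric invariants, unchanged under base change, so Siegel's finiteness statement may be applied over $L$: if $\tilde{C}$ had positive genus or $C$ had more than two points at infinity, then $C(\O_{L,S})$ would be finite. Therefore $\tilde{C}$ has genus zero and at most two points at infinity. Over $\kbar$ a smooth projective genus-zero curve is $\mathbb{P}^1$, and since $C$ is affine it has at least one point at infinity. If it has exactly one, then $C_{\kbar} \cong \mathbb{A}^1$, and deleting a further $\kbar$-point $P$ gives an open subcurve $C' = \mathbb{A}^1 \setminus \{P\} \cong \mathbb{G}_m$; if it has exactly two, then $C_{\kbar} \cong \mathbb{P}^1 \setminus \{P,Q\} \cong \mathbb{G}_m$ and one may take $C' = C$. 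In either case the required subcurve exists.

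The genuinely deep input here, Siegel's finiteness theorem, is assumed, so the only real obstacle is careful bookkeeping. In the forward direction I must check that $S$ can be enlarged to absorb the denominators of the finitely many coefficients $a_{ij}$ while still leaving $\O_{L,S}^*$ infinite, and in the reverse direction I must be careful to invoke Siegel's theorem over the extension field $L$, justified by the invariance of the genus and of the geometric number of points at infinity. Beyond these points no further estimates are needed.
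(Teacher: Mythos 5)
Your proposal is correct and matches the paper's (implicit) argument: the paper presents Theorem \ref{Sieg} as a repackaging of classical Siegel finiteness plus the observation that genus zero with at most two points at infinity forces $C_{\kbar}\cong\mathbb{A}^1$ or $\mathbb{G}_m$, each of which contains a copy of $\mathbb{G}_m$, and your forward direction via pulling back $S$-units (after enlarging $S$ to absorb coefficient denominators) is the same mechanism the paper uses for the easy direction of its main theorem. No gaps; your bookkeeping on descent to $L$ and on invariance of the genus and the geometric points at infinity is exactly what is needed.
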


In fact, one can give precise necessary and sufficient conditions for the infinitude of $C(\O_{k,S})$ (see \cite{ABP}).  We will generalize Siegel's theorem to integral points of bounded degree.  There is one obvious way to construct infinitely many integral points of degree $d$ or less on an affine curve: one may pull back $S$-integral points on $\mathbb{G}_m$ (i.e., $S$-units) via an appropriate map of degree $d$ or less.  Our main result asserts that this obvious construction essentially explains all affine curves with an infinite set of integral points of degree $d$ or less.  Let $\overline{\O}_{k,S}$ denote the integral closure of $\O_{k,S}$ in $\kbar$.  If $L$ is a finite extension of  a number field $k$ and $S$ is a set of places of $k$, then let $S_L$ denotes the set of places of $L$ lying above places in $S$.

\begin{theorem}
\label{mtheorem2}
Let $C\subset \mathbb{A}^n$ be a nonsingular affine curve defined over a number field $k$.  Let $d$ be a positive integer.  Then there exists a finite extension $L$ of $k$ and a finite set of places $S$ of $L$ such that the set of $S$-integral points of degree $d$ or less over $L$,
\begin{align}
\label{infset}
\{P\in C(\overline{\O}_{L,S})\mid [L(P):L]\leq d\}=\bigcup_{\substack{M\supset L\\ [M:L]\leq d}}C(\O_{M,S_M}),
\end{align}
is infinite if and only if there exists, over $\kbar$, an affine curve $C'\subset C$ and a finite morphism $\phi:C'\to \mathbb{G}_m$ with $\deg \phi\leq d$.
\end{theorem}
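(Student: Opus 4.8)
The plan is to prove both implications, with the forward direction being a routine pullback construction and the converse carrying the real content.

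For the \emph{if} direction, suppose $C'\subset C$ and a finite $\phi\colon C'\to\mathbb{G}_m$ with $\deg\phi\le d$ are defined over a finite extension $L$ of $k$. First I would enlarge $S$ so that the restrictions to $C'$ of the coordinate functions $x_1,\dots,x_n$ are integral over $\O_{L,S}[t,t^{-1}]$ with coefficients in $\O_{L,S}$, where $t$ is the coordinate on $\mathbb{G}_m$ pulled back by $\phi$; this is possible since $\phi$ finite makes $\O(C')$ a finite $\O(\mathbb{G}_m)$-module. After also enlarging $S$ so that $\O_{L,S}^*$ is infinite, for each $S$-unit $u$ I pick $P_u\in\phi^{-1}(u)$. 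As $u\in\mathbb{G}_m(L)$, the fiber has length $\deg\phi\le d$ over $L$, so $[L(P_u):L]\le d$; and since $u,u^{-1}\in\O_{L,S}^*$, evaluating the monic integral relations at $P_u$ gives $x_i(P_u)\in\overline{\O}_{L,S}$, so $P_u$ is $S$-integral. Distinct $u$ give distinct $P_u$ because $\phi(P_u)=u$, producing infinitely many integral points of degree $\le d$.

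For the \emph{only if} direction I would argue by strong induction on the exact degree $e$, after the reformulation: the target is equivalent to the existence of a nonconstant $\phi\in\kbar(\tilde C)$ with $\deg\phi\le d$ and $D_\infty\subseteq\Supp\dv(\phi)$, where $D_\infty=\tilde C\setminus C$ (take $C'=\tilde C\setminus\Supp\dv(\phi)$, the full preimage of $\mathbb{G}_m$). Pigeonholing on the exact degree gives, for some fixed $e\le d$, infinitely many $S$-integral $L$-rational points of $\Sym^e C$, i.e.\ effective $L$-divisors $D_P$ of degree $e$ supported on $C$ and avoiding $D_\infty$ $S$-integrally; the base case $e=1$ is Siegel's theorem. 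In general, by Northcott these points have unbounded height on $\Sym^e\tilde C$, so their Zariski closure has a positive-dimensional irreducible component $Z_0$ in which they are dense. Composing with the Abel--Jacobi map $u\colon\Sym^e\tilde C\to\Jac(\tilde C)$ and applying Faltings' theorem to the image shows $\overline{u(Z_0)}=a+B$ is a translate of an abelian subvariety $B\subseteq\Jac(\tilde C)$. If $\dim B=0$, all $D_P$ lie in one complete linear system $|D_0|\cong\mathbb{P}^r$ with $r\ge1$; writing the associated morphism $g\colon\tilde C\to\mathbb{P}^r$, each $D_P$ (minus its fixed part) equals $g^*H_P$, and integrality forces the hyperplanes $H_P$ to avoid the finite set $g(D_\infty)$. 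Reducing to a pencil and applying Siegel's theorem on $\mathbb{P}^1$ forces $g(D_\infty)$ into at most two points; after a coordinate change placing them at $0,\infty$, the function $\phi=g$ has degree $\le e\le d$ and $D_\infty\subseteq\Supp\dv(\phi)$, as required.

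If instead $\dim B\ge1$, then $W_e:=u(\Sym^e\tilde C)$ contains the positive-dimensional translate $a+B$; by structure theorems for abelian subvarieties of $W_e$ (in the spirit of Abramovich--Harris and Debarre--Fahlaoui) this yields a finite covering $\pi\colon\tilde C\to\tilde C'$ of degree $\ge2$ under which, up to a fixed divisor, each $D_P$ is a pullback $\pi^*D'_P$ of an effective divisor of degree $e'<e$ on $\tilde C'$. Since $D_P$ avoids $D_\infty$, the $D'_P$ avoid $\pi(D_\infty)=:D'_\infty$, so they are infinitely many $S$-integral points of degree $e'$ on $C'=\tilde C'\setminus D'_\infty$. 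By the induction hypothesis there is $\phi'\in\kbar(\tilde C')$ with $\deg\phi'\le e'$ and $D'_\infty\subseteq\Supp\dv(\phi')$; then $\phi=\phi'\circ\pi$ has degree $\le\deg\pi\cdot e'\le e\le d$ and satisfies $D_\infty\subseteq\pi^{-1}(D'_\infty)\subseteq\Supp\dv(\phi)$, closing the induction.

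I expect the main obstacle to be the case $\dim B\ge1$: making precise, in the relevant range of $e$, the passage from a positive-dimensional abelian subvariety of $W_e$ to an honest covering $\pi$ along which the \emph{integral} points (not merely divisor classes) descend with a genuinely fixed residual divisor and the claimed control on degrees and on $\pi(D_\infty)$. A secondary technical point is the reduction from $\mathbb{P}^r$ to a pencil in the linear-system case while keeping $g(D_\infty)$ within two points, which may require Siegel/Vojta for integral points on $\mathbb{G}_m^r\cong\mathbb{P}^r$ minus $r+1$ hyperplanes rather than a naive projection.
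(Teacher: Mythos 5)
Your ``if'' direction matches the paper's (pull back $S$-units through $\phi$ after enlarging $S$ to make the coordinate functions integral over $\O_{L,S}[t,t^{-1}]$), and the frame of your converse---pass to $\Sym^e$ of the curve, take the Zariski closure of the associated rational integral points, apply the Abel--Jacobi map and Faltings' theorem, and split according to the dimension of the image---is also the paper's frame. The fatal problem is your case $\dim B\geq 1$. There you invoke a ``structure theorem \ldots in the spirit of Abramovich--Harris and Debarre--Fahlaoui'' producing a degree $\geq 2$ covering $\pi\colon\tilde{C}\to\tilde{C}'$ along which the divisors $D_P$ descend, up to a fixed part, to divisors of smaller degree. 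No such theorem exists: Debarre--Fahlaoui is a counterexample paper, not a structure theorem. For every $d\geq 4$ they construct curves for which $W_d\subset\Jac(\tilde{C})$ contains positive-dimensional translated abelian subvarieties while the curve admits no map of degree $\leq d$ to $\mathbb{P}^1$ or to an elliptic curve; this is exactly why the Abramovich--Harris conjecture fails for $d\geq 4$, as the paper's introduction recounts. Note also that your induction step in this case uses integrality only to transport points along $\pi$, so if it worked it would essentially reprove the (false) Abramovich--Harris statement for algebraic points. The case carrying all of the difficulty is therefore not just unproved but unprovable along the route you chose.

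What the paper does instead is instructive: it never descends to a smaller curve, but stays on $W\subset\Sym^d(C)$ and uses integrality at exactly the point where your argument breaks. Writing $F_i=E_i|_W$ for the images $E_i$ of $P_i\times C^{d-1}$, a rank count for ample divisors in $\Div(W)$ (Lemma \ref{ark}) combined with Vojta's semiabelian theorem in the form of Lemma \ref{salem} shows that either $f(W)$ is a point (back to your linear-system case) or $\cap_i\Supp F_i\neq\emptyset$. In the latter case, with $Y=f(\cap_i\Supp F_i)$: either some fiber of $f$ over $Y$ is not contained in $\Supp E$, which immediately yields a degree $\leq d$ function $\phi$ with $\phi(\{P_1,\ldots,P_q\})=\{0\}$; or $f^{-1}(Y)\subset\Supp E$, in which case Faltings' Theorem \ref{Falt1} makes $X=f(W)$ a translated abelian subvariety, a codimension-one component of $Y$ would make $f(R'')$ a set of integral points relative to a divisor on an abelian variety, contradicting the non-density statement of Theorem \ref{Voj2}, and codimension $\geq 2$ is ruled out by the same rank count applied to a curve in $X$ avoiding $Y$. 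A secondary gap: in your $\dim B=0$ case, ``reducing to a pencil and applying Siegel's theorem on $\mathbb{P}^1$'' is not adequate; the paper needs the unit equation in the form of Lemma \ref{hyplem} (i.e.\ Subspace-theorem input) together with a careful general-position argument to extract a pencil while keeping the images of $P_1,\ldots,P_q$ inside $\{0,\infty\}$. You flag both issues yourself, but the first is not a technical obstacle to be smoothed over---it is a dead end, and the integrality-based argument above is the content that replaces it.
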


If $C\subset \mathbb{A}^n$ is a singular affine curve, then there exists an infinite set \eqref{infset} (for some $L$ and $S$) if and only if the same is true of the normalization of $C$ (with some affine embedding).  Thus, there is no loss of generality in considering nonsingular affine curves.  Theorem \ref{mtheorem2} may alternatively be stated in terms of points at infinity as follows.

\begin{theorem}
\label{mtheorem}
Let $C\subset \mathbb{A}^n$ be a nonsingular affine curve defined over a number field $k$.  Let $\tilde{C}$ be a nonsingular projective completion of $C$ and let $(\tilde{C}\setminus C)(\kbar)=\{P_1,\ldots, P_q\}$.  Let $d$ be a positive integer.  Then there exists a finite extension $L$ of $k$ and a finite set of places $S$ of $L$ such that the set
\begin{align*}
\{P\in C(\overline{\O}_{L,S})\mid [L(P):L]\leq d\}
\end{align*}
is infinite if and only if there exists a morphism $\phi:\tilde{C}\to\mathbb{P}^1$, over $\kbar$, with $\deg \phi\leq d$ and $\phi(\{P_1,\ldots, P_q\})\subset \{0,\infty\}$.
\end{theorem}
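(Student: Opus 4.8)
The plan is to reduce Theorem \ref{mtheorem} to Theorem \ref{mtheorem2}, since the two statements involve the \emph{identical} integral-point condition on the left-hand side. It therefore suffices to establish the equivalence, taken throughout over $\kbar$, of the two geometric criteria: \textbf{(A)} there is an affine curve $C'\subset C$ and a finite morphism $\phi\colon C'\to\mathbb{G}_m$ with $\deg\phi\le d$ (the criterion of Theorem \ref{mtheorem2}); and \textbf{(B)} there is a nonconstant morphism $\bar\phi\colon\tilde{C}\to\mathbb{P}^1$ with $\deg\bar\phi\le d$ and $\bar\phi(\{P_1,\dots,P_q\})\subset\{0,\infty\}$ (the criterion of the present theorem). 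All of the content lies in matching a finite map into $\mathbb{G}_m=\mathbb{P}^1\setminus\{0,\infty\}$ with the behavior of its projective extension at the points at infinity.

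For (A) $\Rightarrow$ (B), I would first extend $\phi$ to the projective completion. As $C'$ is a one-dimensional subvariety of the irreducible curve $C$, it is a nonempty open, hence dense, nonsingular subset of $\tilde{C}$; since $\mathbb{P}^1$ is projective, the rational map given by $\phi$ extends uniquely to a morphism $\bar\phi\colon\tilde{C}\to\mathbb{P}^1$, and $\deg\bar\phi=\deg\phi\le d$ because $C'$ and $\tilde{C}$ share a function field. The crucial step is to exploit finiteness: being a nonconstant morphism of smooth projective curves, $\bar\phi$ is finite, so $\bar\phi^{-1}(\mathbb{G}_m)\to\mathbb{G}_m$ is finite. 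Now $C'\subseteq\bar\phi^{-1}(\mathbb{G}_m)$ is open in the irreducible, hence connected, curve $\bar\phi^{-1}(\mathbb{G}_m)$, and the restriction $\phi=\bar\phi|_{C'}$ can be finite (equivalently, proper) only if $C'$ is also closed there; connectedness then forces $C'=\bar\phi^{-1}(\mathbb{G}_m)$. Consequently $\tilde{C}\setminus C\subseteq\tilde{C}\setminus C'=\bar\phi^{-1}(\{0,\infty\})$, i.e. $\bar\phi(P_i)\in\{0,\infty\}$ for every $i$, which is precisely (B).

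For the converse, given $\bar\phi$ as in (B) I would set $C'=\bar\phi^{-1}(\mathbb{G}_m)=\tilde{C}\setminus\bar\phi^{-1}(\{0,\infty\})$. The hypothesis yields $\{P_1,\dots,P_q\}\subseteq\bar\phi^{-1}(\{0,\infty\})$, so $C'\subseteq\tilde{C}\setminus\{P_1,\dots,P_q\}=C$; since $C$ is affine we have $q\ge1$, whence $\bar\phi^{-1}(\{0,\infty\})$ is a nonempty finite set and $C'$ is a nonempty affine curve. The restriction $\phi=\bar\phi|_{C'}\colon C'\to\mathbb{G}_m$ is the base change of the finite morphism $\bar\phi$ along the open immersion $\mathbb{G}_m\hookrightarrow\mathbb{P}^1$, hence finite, of degree $\deg\bar\phi\le d$. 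This gives (A), completing the equivalence and the deduction of Theorem \ref{mtheorem}.

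The one genuine subtlety, and the step I expect to demand the most care, is the role of finiteness in (A) $\Rightarrow$ (B): it is exactly finiteness that pins the points at infinity to $\{0,\infty\}$. Without it the equivalence is false—for instance the (non-finite) map $t\mapsto t$ from $\mathbb{P}^1\setminus\{0,1,\infty\}$ to $\mathbb{G}_m$ meets every other requirement of (A), yet its degree-one extension is an automorphism of $\mathbb{P}^1$ and cannot send three points into $\{0,\infty\}$, so (B) fails. Recognizing that a nonconstant $\phi\colon C'\to\mathbb{G}_m$ is finite if and only if $C'$ is the full preimage $\bar\phi^{-1}(\mathbb{G}_m)$ is thus the heart of the argument; the remaining ingredients—extension of rational maps from smooth curves, invariance of degree, and stability of finiteness under base change—are standard.
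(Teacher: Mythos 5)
Your equivalence argument between the two geometric criteria is correct, and it is essentially the same as the paper's own Theorem \ref{lequiv}: extend $\phi$ to $\bar\phi\colon\tilde{C}\to\mathbb{P}^1$, use finiteness of $\phi$ together with the fact that finite morphisms are closed to force $C'=\bar\phi^{-1}(\mathbb{G}_m)$ (the paper phrases this as $C'\hookrightarrow C''$ being finite, hence closed, hence equal), and conversely restrict $\bar\phi$ over $\mathbb{G}_m$. Your observation that finiteness is the crux, with the counterexample $\mathbb{P}^1\setminus\{0,1,\infty\}\hookrightarrow\mathbb{G}_m$, is exactly the right point to stress.

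However, as a proof of Theorem \ref{mtheorem} the proposal has a genuine gap: it is circular. Theorem \ref{mtheorem2} is not an independently established result that you may cite; it is the main theorem of this paper, and the paper's own logical order is the reverse of yours --- it proves Theorem \ref{mtheorem} directly and then deduces Theorem \ref{mtheorem2} via the very equivalence (Theorem \ref{lequiv}) that you have reproduced. The two theorems are reformulations of one another, so establishing their equivalence proves neither. All of the actual content of the paper's proof of Theorem \ref{mtheorem} is missing from your proposal: the easy direction, in which one pulls back $S$-units through $\phi$ and uses finiteness of $\phi$ (monic equations over $\O_{k,S}[\phi,\frac{1}{\phi}]$, or functoriality of Weil functions) to see that the preimages are integral points of degree $\leq d$; and the hard direction, Theorem \ref{mtheorem3}, which associates to integral points of degree $d$ a set of $k$-rational integral points on $\Sym^d(C)$, and then analyzes the image of their Zariski closure under the Abel--Jacobi map $\Sym^d(C)\to\Jac(C)$ using Faltings' Theorems \ref{Falt1} and \ref{Fal2}, Vojta's Theorem \ref{Voj1}, the unit equation (Theorem \ref{sunit}), and Lemmas \ref{salem} and \ref{ark}. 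A correct write-up along your lines would have to either prove Theorem \ref{mtheorem2} from scratch (which amounts to redoing that arithmetic argument) or prove Theorem \ref{mtheorem} directly, with your equivalence then serving only to transfer the result to the other formulation.
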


We will prove Theorem \ref{mtheorem2} in this form (see Lemma \ref{lequiv} for the equivalence of the two statements).
Using the Schmidt Subspace Theorem, Corvaja and Zannier \cite{CZ} proved the case $d=2$ and $q\geq 4$ of Theorem \ref{mtheorem} (in fact, in this case they proved the stronger statement of Theorem \ref{mtheorem3}\eqref{mii}).  The case $d=2$ and $q=3$ was proved for nonhyperelliptic curves in \cite[Th.\ 6]{Lev}.  If $q>2d$, then there cannot exist a morphism $\phi$ as in Theorem \ref{mtheorem} and so the set $\{P\in C(\overline{\O}_{L,S})\mid [L(P):L]\leq d\}$ is always finite.  This consequence of Theorem \ref{mtheorem} was noted previously \cite[Cor.~14.14]{Lev4} as a consequence of a Diophantine approximation inequality of Vojta \cite{V4}.  Finding necessary and sufficient conditions similar to Theorems \ref{mtheorem2} and \ref{mtheorem} for a fixed ground field $k$ and finite set of places $S$ of $k$ appears to be a subtle issue (see Examples \ref{ex1} and \ref{ex2}).

As first noticed by Osgood and Vojta, there is a surprising correspondence between certain statements in Diophantine approximation and certain statements in Nevanlinna theory.  We refer the reader to \cite{V2} for Vojta's dictionary between the two subjects.  At the qualitative level, Siegel's theorem on integral points on curves is related to Picard's well-known theorem that a nonconstant holomorphic function omits at most one finite complex value.  More precisely, Siegel's theorem corresponds to the following result of Picard, which we state in a way analogous to Theorem \ref{Sieg}.
\begin{theorem}
\label{tPic}
Let $C$ be a nonsingular complex affine curve.  Then there exists a nonconstant holomorphic map $f:\mathbb{C}\to C$ if and only if there exists $C'\subset C$ with $C'$ isomorphic to $\mathbb{G}_m$.
\end{theorem}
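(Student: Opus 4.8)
The plan is to follow the two-step pattern of Siegel's theorem (Theorem \ref{Sieg}). I would first reduce the geometric condition to a classification of $C$ itself: the existence of an open subcurve $C'\subset C$ isomorphic to $\mathbb{G}_m$ is equivalent to $C$ being isomorphic, as a complex manifold, to $\mathbb{A}^1=\mathbb{C}$ or to $\mathbb{G}_m=\mathbb{C}^{*}$. Indeed, such a $C'$ is dense and open in the irreducible curve $C$, so $C$ and $C'$ share the same nonsingular projective completion, namely $\mathbb{P}^{1}$; thus $C=\mathbb{P}^{1}\setminus T$ for a finite set $T$, and the inclusion $\mathbb{G}_m=\mathbb{P}^{1}\setminus\{0,\infty\}\cong C'\subset C$ forces $T\subseteq\{0,\infty\}$, while affineness forces $T\neq\varnothing$. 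Hence $\lvert T\rvert\in\{1,2\}$ and $C\cong\mathbb{A}^{1}$ or $C\cong\mathbb{G}_m$; conversely $\mathbb{A}^{1}\supset\mathbb{A}^{1}\setminus\{0\}\cong\mathbb{G}_m$, giving the equivalence. It therefore suffices to characterize, in these terms, when $C$ admits a nonconstant holomorphic map from $\mathbb{C}$.

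The ``if'' direction is then immediate. The exponential $z\mapsto e^{z}$ is a nonconstant holomorphic map $\mathbb{C}\to\mathbb{C}^{*}$; composing it with an isomorphism $\mathbb{G}_m\xrightarrow{\sim}C'$ and the inclusion $C'\hookrightarrow C$ yields a nonconstant holomorphic map $\mathbb{C}\to C$ (and when $C\cong\mathbb{A}^{1}$ the identity map already suffices).

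For the ``only if'' direction I would appeal to the Uniformization Theorem. Let $\tilde C$ be the projective completion of $C$, let $g$ be its genus, and let $q$ be the number of points of $\tilde C\setminus C$, so that the Euler characteristic of $C$ equals $2-2g-q$. The open Riemann surfaces of nonnegative Euler characteristic are precisely $\mathbb{C}$ (the case $g=0$, $q=1$) and $\mathbb{C}^{*}$ (the case $g=0$, $q=2$); any nonsingular affine curve not isomorphic to $\mathbb{A}^{1}$ or $\mathbb{G}_m$ satisfies $2-2g-q<0$ and, by uniformization, has universal cover the unit disk $\mathbb{D}$. In that case a holomorphic map $f\colon\mathbb{C}\to C$ lifts, since $\mathbb{C}$ is simply connected, to a holomorphic map $\tilde f\colon\mathbb{C}\to\mathbb{D}$; being bounded, $\tilde f$ is constant by Liouville's theorem, so $f$ is constant. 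The contrapositive shows that a nonconstant $f$ forces $C\cong\mathbb{A}^{1}$ or $C\cong\mathbb{G}_m$, and the reduction above completes the proof.

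The one substantive ingredient is the Uniformization Theorem, invoked to identify the non-hyperbolic open Riemann surfaces; I expect this classification to be the crux, everything else being formal. For the genus-zero case one could instead use the classical heart of the matter---the generalized little Picard theorem, that a holomorphic map $\mathbb{C}\to\mathbb{P}^{1}$ omitting three points is constant---applied to $C\subset\mathbb{P}^{1}\setminus\{P_{1},P_{2},P_{3}\}$ when $q\ge 3$. The positive-genus cases, however, are most cleanly dispatched by uniformization (for $g\ge 2$ even $\tilde C$ itself has the disk as universal cover), so I would run the whole argument through uniformization.
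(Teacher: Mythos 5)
Your proof is correct, but there is nothing in the paper to compare it against: Theorem \ref{tPic} is stated there as Picard's classical theorem, without proof, purely as the model that the paper's new result (Theorem \ref{mtheoremb}) generalizes. On its own merits your argument is the standard one and is complete. The reduction is sound: an open subcurve $C'\subset C$ with $C'\cong\mathbb{G}_m$ has the same nonsingular projective model as $C$, forcing $\tilde{C}\cong\mathbb{P}^1$ and $C=\mathbb{P}^1\setminus T$ with $\emptyset\neq T\subseteq\{0,\infty\}$, so the geometric condition is equivalent to $C\cong\mathbb{A}^1$ or $C\cong\mathbb{G}_m$; the exponential map settles the ``if'' direction; and uniformization, lifting $f$ to the universal cover, and Liouville settle the ``only if'' direction, since the only non-compact Riemann surfaces whose universal cover is not the disk are $\mathbb{C}$ and $\mathbb{C}^*$. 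One comparative remark is worth making. The machinery the paper actually develops for the degree-$d$ generalization runs through value distribution theory---Borel's lemma and the Bloch--Kawamata--Noguchi theorems on holomorphic curves in subvarieties of (semi)abelian varieties, applied to the algebroid reduction $g:\mathbb{C}\to\Sym^d(C)$ of $f$---and this is not a stylistic choice: your lifting-plus-Liouville argument does not extend to $d\geq 2$, because a finite analytic cover $M\to\mathbb{C}$ of degree $d\geq 2$ need not be simply connected or parabolic, so a holomorphic map $M\to C$ cannot be killed by lifting to a disk. So your proof is the right, most elementary argument for the $d=1$ statement quoted here, while the paper's heavier framework is what makes the bounded-degree case (Theorem \ref{mtheoremb}) tractable.
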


Unsurprisingly, using Vojta's dictionary, the proof of Theorem \ref{mtheorem} can be used to obtain an analogous result in value distribution theory, generalizing Picard's theorem.  An infinite set of integral points of degree $d$ on a variety $X$ corresponds to a nonconstant holomorphic map from a $d$-sheeted covering of $\mathbb{C}$ to $X$.  We obtain the following theorem.
\begin{theorem}
\label{mtheoremb}
Let $\tilde{C}$ be a nonsingular complex projective curve.  Let $C\subset \tilde{C}$ be a complex affine curve and let $\tilde{C}\setminus C=\{P_1,\ldots, P_q\}$.  Let $d$ be a positive integer.  Then there exists a finite analytic covering $\pi:M\to \mathbb{C}$ with $\deg \pi\leq d$ and a nonconstant holomorphic map $f:M\to C$ if and only if there exists a morphism $\phi:\tilde{C}\to\mathbb{P}^1$ with $\deg \phi\leq d$ and $\phi(\{P_1,\ldots, P_q\})\subset \{0,\infty\}$.
\end{theorem}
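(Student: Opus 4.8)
The plan is to prove both implications, treating the reverse direction (existence of $\phi$ implies existence of the covering and map) as a direct construction and the forward direction as the substantive one, where I will mirror the proof of Theorem~\ref{mtheorem} under Vojta's dictionary. Throughout set $D=P_1+\cdots+P_q$, so that $C=\tilde C\setminus\Supp D$. For the reverse direction, suppose $\phi\colon\tilde C\to\mathbb P^1$ has $\deg\phi\le d$ and $\phi(\{P_1,\dots,P_q\})\subset\{0,\infty\}$. Then $C'':=\tilde C\setminus\phi^{-1}(\{0,\infty\})\subset C$, and $\phi$ restricts to a finite morphism $\phi\colon C''\to\mathbb G_m$ of degree $\le d$. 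By Picard's theorem (Theorem~\ref{tPic}), or simply via $\exp$, there is a nonconstant holomorphic $g\colon\mathbb C\to\mathbb G_m$ which is an unramified covering. I would then form the analytic fibre product $M:=\mathbb C\times_{\mathbb G_m}C''$ of $g$ and $\phi|_{C''}$. Because $g$ is an unramified covering, its base change $f\colon M\to C''$ is an unramified covering, and in particular $M$ is a (possibly disconnected) Riemann surface; because $\phi|_{C''}$ is proper and finite of degree $\le d$, its base change $\pi\colon M\to\mathbb C$ is proper and finite of the same degree. Passing to a connected component (on which $\pi$, being proper and open, remains surjective onto $\mathbb C$) yields a finite analytic covering $\pi\colon M\to\mathbb C$ of degree $\le d$ together with a holomorphic map $f\colon M\to C''\subset C$; the latter is nonconstant because $\phi\circ f=g\circ\pi$ is.

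The forward direction is the heart of the matter. Given $\pi\colon M\to\mathbb C$ of degree $e\le d$ and a nonconstant $f\colon M\to C$, the first step is the trace construction: sending $z\in\mathbb C$ to the effective divisor $\sum_{Q\in\pi^{-1}(z)}f(Q)$, counted with the multiplicities of $\pi$, defines a holomorphic map $F\colon\mathbb C\to\tilde C^{(e)}$ into the $e$-th symmetric power (holomorphy across the branch locus of $\pi$ follows from Riemann's extension theorem applied to the symmetric functions of the fibre). Since $f$ avoids $D$, the image of $F$ lies in $C^{(e)}=\tilde C^{(e)}\setminus\Theta$, where $\Theta=\sum_{i=1}^q\Theta_i$ and $\Theta_i=P_i+\tilde C^{(e-1)}$; and $F$ is nonconstant, since a constant $F$ would force $f$ to take values in a finite set and hence be constant on the connected surface $M$. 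In this way the covering disappears and the problem is transformed into a Picard-type problem for the single entire curve $F\colon\mathbb C\to\tilde C^{(e)}$ omitting the $q$ divisors $\Theta_1,\dots,\Theta_q$ --- exactly the translation, under Vojta's dictionary, of the reduction of integral points of degree $\le d$ on $C$ to integral points on the symmetric power $C^{(e)}$ that underlies the arithmetic proof of Theorem~\ref{mtheorem}.

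To extract $\phi$ I would analyze $F$ through the generalized Albanese. Let $J=\Jac(\tilde C,D)$ be the generalized Jacobian of $\tilde C$ with modulus $D$; it is a semiabelian variety in an exact sequence $0\to T\to J\to\Jac(\tilde C)\to 0$ with toric part $T\cong\mathbb G_m^{q-1}$, and the generalized Abel--Jacobi map induces a morphism $C^{(e)}\to J$, giving $G\colon\mathbb C\to J$. The substantive input --- the Nevanlinna analogue of the Schmidt Subspace Theorem step of Corvaja--Zannier --- is the Second Main Theorem for semiabelian varieties of Noguchi--Winkelmann--Yamanoi together with the Bloch--Ochiai theorem: because $F$ omits $\Theta$, which is precisely the toric boundary of $J$, the Zariski closure of $G(\mathbb C)$ must be a translate of a proper semiabelian subvariety $B\subsetneq J$ whose quotient $J/B$ retains a nontrivial toric part. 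A character $\chi\colon J\to\mathbb G_m$ trivial on $B$ then pulls back along $C\to J$ to a nonconstant unit $u\in\mathcal O(C)^{*}$, that is, to a rational function on $\tilde C$ with zeros and poles supported on $\{P_1,\dots,P_q\}$; this is exactly a morphism $\phi\colon\tilde C\to\mathbb P^1$ with $\phi(\{P_1,\dots,P_q\})\subset\{0,\infty\}$. In the degenerate case where $G$ is itself constant, $F$ maps into a fibre of the Abel--Jacobi map, i.e.\ into a complete linear system $\mathbb P^N$, omitting the $q$ hyperplanes cut out by the incidence conditions $\ge P_i$; Borel's theorem then produces multiplicative relations among the corresponding linear forms, again yielding such a unit $u$.

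I expect two points to require the most care, and they are where the genuine content lies. The first is guaranteeing that the degeneration produced above is toric rather than purely abelian --- that $T\not\subset B$, so that an actual character, and hence a map $\phi$ to $\mathbb P^1$, is produced; this is the step that uses the omitted divisor $\Theta$ in an essential way and corresponds precisely to the Subspace-Theorem input in Theorem~\ref{mtheorem}. The second is the degree bound: one must verify that the resulting $\phi$ satisfies $\deg\phi\le e\le d$, which should follow by tracking the unit $u$ through the trace construction over the $e$-sheeted fibres, but requires the careful bookkeeping that, on the arithmetic side, keeps the degree of the auxiliary extension under control. Everything else --- holomorphy and nonconstancy of $F$, and the comparison of the order functions $T_F$ and $T_f$ --- should be routine once these two points are settled.
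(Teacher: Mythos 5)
Your reverse direction (base--changing $\exp\colon\mathbb{C}\to\mathbb{G}_m$ along $\phi|_{\phi^{-1}(\mathbb{G}_m)}$) is correct, and your opening move in the forward direction --- the trace/algebroid reduction $F\colon\mathbb{C}\to\Sym^e(\tilde{C})$ omitting the divisors $P_i+\Sym^{e-1}(\tilde{C})$ --- is exactly where the paper's (omitted) proof starts, citing \cite{Stoll}. But the route you then take through the generalized Jacobian has a fatal flaw: it aims at a conclusion that is strictly stronger than the theorem's and is false under the theorem's hypotheses. A character of $J=\Jac(\tilde{C},D)$ pulled back to $C$ is a \emph{unit}, i.e.\ a function whose full zero and polar divisors are supported on $\{P_1,\ldots,P_q\}$; the theorem only asks that each $P_i$ be a zero or a pole, and $\phi$ is allowed to have additional zeros and poles inside $C$ --- this is precisely why Theorem \ref{mtheorem2} produces a finite map on a possibly proper subcurve $C'\subset C$ rather than on $C$ itself. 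These two conditions are not interchangeable. Concretely, let $\tilde{C}=E$ be an elliptic curve and $C=E\setminus\{P_1,P_2\}$ with $P_1-P_2$ non-torsion in $\Jac(E)$. For generic $Q$ there is a degree-two function with zero divisor $P_1+Q$ and polar divisor $P_2+Q'$ (where $Q'=Q\oplus P_1\ominus P_2$), so the right-hand side of the theorem holds with $d=2$ and, by the easy direction, $M$, $\pi$, $f$ exist. Yet every unit on $C$ has divisor $n(P_1-P_2)$, hence is constant, and correspondingly $\Hom(J,\mathbb{G}_m)=0$ because the extension $0\to\mathbb{G}_m\to J\to\Jac(E)\to 0$ is classified by the non-torsion class of $P_1-P_2$. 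So no character $\chi$ trivial on $B$ (or otherwise) can exist here; this also shows that your first ``point requiring care'' cannot be repaired --- a nontrivial toric part of $J/B$ does not yield a character when the extension class is non-torsion --- and your second (the bound $\deg\phi\le d$) is a further independent gap, since even when nonconstant units exist their minimal degree is not controlled by $d$.

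The paper avoids all of this by never leaving the symmetric power: it runs the argument of Theorem \ref{mtheorem3} on $W=\overline{F(\mathbb{C})}\subset\Sym^e(\tilde{C})$ using the Abel--Jacobi map to the \emph{ordinary} Jacobian, substituting Bloch--Ochiai--Kawamata for Faltings' Theorem \ref{Falt1}, Siu--Yeung and Noguchi for Theorem \ref{Voj2}, Borel's lemma for the unit equation behind Lemma \ref{hyplem}, and keeping the geometric Lemmas \ref{salem} (in its analytic form from \cite{NW,NW2}) and \ref{ark}. In that argument $\phi$ is cut out of degree-$e$ divisors: either from a pencil inside a complete linear system $|D'|$ with $\deg D'=e\le d$ (when $f(W)$ is a point), or from a divisor $D'\ge\sum_i P_i$ of degree $e$ whose linear system has no base point at any $P_i$ (when $\dim f(W)>0$). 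That is simultaneously the source of the degree bound and of the correct, weaker form of the conclusion, and it is the piece your proposal would need to adopt wholesale.
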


For $M$ and $\tilde{C}$ as in Theorem \ref{mtheoremb}, define a point $P\in \tilde{C}$ to be a Picard exceptional point of a holomorphic map $f:M\to \tilde{C}$ if $P\not\in f(M)$.  Then we may view Theorem~\ref{mtheoremb} as giving a characterization of Picard exceptional points.

\begin{theorem}
\label{mtheoremb2}
Let $\tilde{C}$ be a nonsingular complex projective curve.  Let $\{P_1,\ldots, P_q\}\subset \tilde{C}$ be a nonempty set and let $d$ be a positive integer.  Then there exists a finite analytic covering $\pi:M\to \mathbb{C}$ with $\deg \pi\leq d$ and a nonconstant holomorphic map $f:M\to \tilde{C}$ with $P_1,\ldots, P_q$ Picard exceptional points of $f$ if and only if there exists a morphism $\phi:\tilde{C}\to\mathbb{P}^1$ with $\deg \phi\leq d$ and $\phi(\{P_1,\ldots, P_q\})\subset \{0,\infty\}$.
\end{theorem}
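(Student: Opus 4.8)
The plan is to deduce this directly from Theorem \ref{mtheoremb}, observing that the present statement is essentially a reformulation of it. First I would set $C = \tilde{C} \setminus \{P_1,\ldots,P_q\}$. Since $\{P_1,\ldots,P_q\}$ is nonempty by hypothesis, $C$ is a complex affine curve, and it satisfies the hypotheses of Theorem \ref{mtheoremb} with $\tilde{C} \setminus C = \{P_1,\ldots,P_q\}$.

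The key translation step is the following elementary observation. A holomorphic map $f: M \to \tilde{C}$ has $P_1,\ldots,P_q$ as Picard exceptional points precisely when $P_i \notin f(M)$ for every $i$, i.e., when $f(M) \cap \{P_1,\ldots,P_q\} = \emptyset$, which is to say $f(M) \subset C$. Hence $f: M \to \tilde{C}$ is a nonconstant holomorphic map with $P_1,\ldots,P_q$ Picard exceptional if and only if $f$ is, after restricting its codomain, a nonconstant holomorphic map $f: M \to C$. Under this identification the existence of a pair $(\pi, f)$ as in the present statement is literally equivalent to the existence of a pair $(\pi, f)$ as in Theorem \ref{mtheoremb}.

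With this dictionary in place, both directions follow at once from Theorem \ref{mtheoremb}: the existence of a finite analytic covering $\pi: M \to \mathbb{C}$ with $\deg \pi \leq d$ together with a nonconstant holomorphic $f: M \to C$ is equivalent to the existence of a morphism $\phi: \tilde{C} \to \mathbb{P}^1$ with $\deg \phi \leq d$ and $\phi(\{P_1,\ldots,P_q\}) \subset \{0,\infty\}$, which is exactly the condition asserted here. I do not expect any genuine obstacle in this argument; the one point requiring care is that the reduction relies on the nonemptiness of $\{P_1,\ldots,P_q\}$, since this is precisely what guarantees that $C$ is affine and hence that Theorem \ref{mtheoremb} applies. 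I would therefore flag this hypothesis explicitly rather than pass over it.
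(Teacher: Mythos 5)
Your proposal is correct and matches the paper's treatment: the paper gives no separate proof of Theorem \ref{mtheoremb2}, presenting it as an immediate reformulation of Theorem \ref{mtheoremb} via the definition of Picard exceptional points, exactly as you do by setting $C=\tilde{C}\setminus\{P_1,\ldots,P_q\}$ and noting that $P_i\notin f(M)$ for all $i$ means $f(M)\subset C$. Your explicit flagging of the nonemptiness hypothesis (which guarantees $C$ is affine) is a sound point of care, implicit in the paper's setup.
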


We are not able to determine the existence of surjective maps $f:M\to\tilde{C}$ (when there are no Picard exceptional points).  This problem is analogous to classifying curves with infinitely many algebraic points of degree $d$ over a number field.  As discussed at the end of this section, this appears to be a more subtle and difficult problem.

To prove Theorem \ref{mtheorem}, we study certain $k$-rational integral point sets on the $d$th symmetric power $\Sym^d(C)$ of a projective curve $C$.  The proof then involves an interplay between the Abel-Jacobi map $\Sym^d(C)\to\Jac(C)$ and deep results on integral points on subvarieties of abelian and semiabelian varieties (the unit equation and theorems of Faltings and Vojta).  

The proof of Theorem \ref{mtheoremb} is completely analogous to the proof of Theorem \ref{mtheorem}, and so we omit the proof.  Indeed, the proof of Theorem \ref{mtheorem} consists of a purely geometric argument combined with the deep arithmetic theorems of Section \ref{ssemi}.  All of the Diophantine results in Section~\ref{ssemi} have exact analogues in value distribution theory, as we now recall.  The analogue of Faltings' Theorem \ref{Falt1} for holomorphic curves in subvarieties of abelian varieties is known as Bloch's conjecture.  After work of Bloch \cite{Bloch}, substantial progress towards the conjecture was made by Ochiai \cite{Och} and a complete proof of the conjecture was given by Kawamata \cite{Kaw} (see also Noguchi-Ochiai \cite{NO}).  The analogous result for holomorphic curves in subvarieties of semiabelian varieties is due to Noguchi \cite{Nog}.  The hyperbolicity of the complement of an ample effective divisor on an abelian variety (the analogue of Faltings' Theorem~\ref{Fal2}) was proved by Siu and Yeung \cite{SY}.  This was again generalized to semiabelian varieties by Noguchi \cite{Nog2}.  Finally, we note that the analogue of Theorem \ref{sunit} on the unit equation is the Borel lemma.  In correspondence with the arithmetic case, one associates to the holomorphic map $f:M\to C$ a holomorphic map $g:\mathbb{C}\to \Sym^d(C)$, the so-called algebroid reduction of $f$ (see \cite{Stoll}).  The proof of Theorem \ref{mtheoremb} now proceeds in the exact same way as the proof of Theorem \ref{mtheorem}, substituting at appropriate steps the appropriate value distribution theory results in place of the arithmetic results.

After stating the basic definitions, in Section \ref{ssemi} we recall the key results on integral points on subvarieties of semiabelian varieties that underpin the proof of Theorem \ref{mtheorem}.  In Section \ref{smain} we give the proof of the main result.  In Section \ref{sopen} we give some examples clarifying our results and mention some possible strengthenings of Theorem \ref{mtheorem}.

Finally, for completeness, we recall the situation for algebraic points of bounded degree on curves (that is, without any integrality assumptions).  As detailed below, the natural analogue of Theorem \ref{mtheorem2} in this setting holds only when $d\leq 3$.  In view of this, it is perhaps a bit surprising that imposing an integrality condition is sufficient to ameliorate the situation.

Let $C$ be a nonsingular projective curve defined over a number field $k$.  Faltings' theorem (the Mordell conjecture) asserts that $C(L)$ is infinite for some finite extension $L$ of $k$ if and only if the genus of $C$ is zero or one.  If $C$ admits a degree $d$ morphism to the projective line or an elliptic curve, then by pulling back $k$-rational points via this morphism one sees that, after possibly replacing $k$ by a larger number field, the set
\begin{align*}
\{P\in C(\kbar)\mid [k(P):k]\leq d\}
\end{align*}
is infinite.  Harris and Silverman \cite{HS} proved the converse in the case $d=2$.

\begin{theorem}[Harris, Silverman]
Let $C$ be a nonsingular projective curve defined over a number field $k$.  Then the set 
\begin{align*}
\{P\in C(\kbar)\mid [L(P):L]\leq 2\}
\end{align*}
is infinite for some finite extension $L$ of $k$ if and only if $C$ is hyperelliptic or bielliptic.
\end{theorem}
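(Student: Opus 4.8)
The plan is to prove the two implications separately, with Faltings' theorem on rational points of subvarieties of abelian varieties (Theorem~\ref{Falt1}) carrying the weight of the harder direction. Throughout I may assume $C$ has genus $g\geq 2$: if $g\leq 1$ then, after enlarging $L$, $C$ already has infinitely many $L$-rational points, while simultaneously $C$ admits a degree-$2$ morphism to $\mathbb{P}^1$ (any $g^1_2$ cut out by a degree-$2$ linear system), so both sides of the equivalence hold automatically. For the easy direction, suppose $C$ is hyperelliptic or bielliptic, so there is a finite degree-$2$ morphism $\phi\colon C\to Z$ with $Z=\mathbb{P}^1$ or $Z$ an elliptic curve, defined over some finite extension $L$ of $k$. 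After enlarging $L$ I may assume $Z(L)$ is infinite (automatic for $\mathbb{P}^1$; for an elliptic curve one enlarges $L$ until $Z$ acquires a point of infinite order). For each $t\in Z(L)$ the fibre $\phi^{-1}(t)$ is an effective degree-$2$ divisor defined over $L$, hence its points have degree at most $2$ over $L$, and distinct $t$ give distinct points; this produces the required infinite set.

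The substance is the converse. First I would translate the hypothesis into a statement about the symmetric square. Since $g\geq 2$, Faltings' theorem forces $C(L')$ to be finite for every fixed $L'$, so an infinite set of points of degree $\leq 2$ over $L$ must contain infinitely many points of degree exactly $2$. Each such point $P$ yields the $\Gal(\kbar/L)$-stable effective divisor $P+P^\sigma$, hence a distinct $L$-rational point of $\Sym^2(C)$, so $\Sym^2(C)(L)$ is infinite. Composing with the Abel--Jacobi map $u\colon \Sym^2(C)\to \Jac(C)$ (over $\kbar$, after choosing a base divisor), I obtain an infinite set of $L$-rational points on the image surface $W_2=u(\Sym^2(C))\subset \Jac(C)$. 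A positive-dimensional fibre of $u$ is exactly a pencil of linearly equivalent degree-$2$ divisors, i.e.\ a $g^1_2$, whose existence already makes $C$ hyperelliptic; so I may assume the fibres of $u$ are finite, whence $W_2(L)$ is infinite.

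Now I apply Faltings' theorem: the Zariski closure of $W_2(L)$ is a finite union of translates $a_i+B_i$ of abelian subvarieties $B_i\subseteq \Jac(C)$ contained in $W_2$, and infinitude of the set forces some $B_i$ to have positive dimension. If $\dim B_i=2$ then $a_i+B_i=W_2$ is a translate of an abelian surface; since the Abel--Jacobi image of $C$ generates $\Jac(C)$ and lies in $W_2$, this forces $g=2$, where $C$ is automatically hyperelliptic. Otherwise $\dim B_i=1$, and pulling back I find an irreducible curve $D\subset \Sym^2(C)$ mapping isomorphically onto a translate $a+E'$ of an elliptic curve $E'\subseteq \Jac(C)$; in particular $D$ has genus $1$.

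The geometric heart, and the step I expect to be the main obstacle, is to extract a bielliptic structure from $D$. I would pull $D$ back to a symmetric curve $\tilde D\subset C\times C$ under the double cover $C\times C\to\Sym^2(C)$ and examine the two projections $\tilde D\to C$. The key claim is that these have degree $1$: if a general $P\in C$ lay on $n\geq 2$ divisors of $D$, then differencing the Abel--Jacobi images of those divisors (all of which lie in the single translate $a+E'$) would produce a nonconstant map of $C$ into $E'$ together with numerical relations incompatible with $g\geq 2$ and $\deg D$. Granting degree $1$, the symmetric curve $\tilde D$ is the graph of an involution $\iota\colon C\to C$ with $D\cong C/\iota$, and since $D$ has genus $1$ the quotient $C\to C/\iota$ is a degree-$2$ morphism to an elliptic curve, so $C$ is bielliptic. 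The delicate part is precisely the ruling out of the higher-degree correspondences and the verification that the genus-$1$ quotient is a genuine smooth elliptic target of a separable degree-$2$ map; this is the classical analysis of curves in symmetric products underlying Harris and Silverman's argument.
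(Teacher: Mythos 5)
First, a remark on the comparison itself: the paper does not prove this statement. It is quoted as background, with the proof deferred to Harris and Silverman \cite{HS}, so there is no internal proof to measure your argument against; it must be judged on its own. Your overall architecture is in fact the standard one (and it parallels the machinery the paper uses for Theorem \ref{mtheorem3}): dispose of genus $\leq 1$, pass from quadratic points to $L$-rational points of $\Sym^2(C)$, push into $\Jac(C)$, apply Faltings' Theorem \ref{Falt1}, and split into the cases of a positive-dimensional Abel--Jacobi fibre (hyperelliptic), a two-dimensional abelian translate (which correctly forces $g=2$ via the fact that differences of points of the Abel--Jacobi curve generate $\Jac(C)$), and an elliptic translate $a+E'\subseteq W_2$. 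The easy direction and all of these reduction steps are sound.

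The gap is exactly where you flag it, and the justification you sketch for it is not merely incomplete but logically misdirected. You claim the projections $\tilde D\to C$ have degree $1$ because $n\geq 2$ would ``produce a nonconstant map of $C$ into $E'$ \ldots incompatible with $g\geq 2$ and $\deg D$.'' But a nonconstant map from $C$ to an elliptic curve is not incompatible with $g\geq 2$: such a map is precisely what a bielliptic structure is, i.e., it is the object you are trying to construct, so its existence can never furnish the contradiction, and the ``numerical relations'' invoked are never specified. The genuine obstruction runs in the opposite direction. One correct version: if $n\geq 2$, then for general $x\in C$ there are distinct points $q_1\neq q_2$ with $x+q_1,\,x+q_2\in D$, so $[q_1-q_2]\in E'$; letting $\delta:C\times C\to\Jac(C)$, $\delta(x,y)=[x-y]$, be the difference map, this produces an irreducible curve $T_1\subset \delta^{-1}(E')$, not contained in the diagonal, which dominates $C$ under a projection. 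Since $C$ may be assumed non-hyperelliptic (that case was already handled), $\delta$ is injective off the diagonal, so $\delta|_{T_1}$ is birational onto $E'$ and the normalization of $T_1$ has genus $\leq 1$. Thus a curve of geometric genus $\leq 1$ dominates $C$, which has genus $\geq 2$ --- impossible by Riemann--Hurwitz. That is, the contradiction comes from a genus-one curve mapping \emph{onto} $C$, not from $C$ mapping to $E'$. This step (together with ruling out that $\tilde D$ splits into two genus-one components, which again uses that a genus-one curve cannot dominate $C$) is the geometric heart of Harris--Silverman's theorem; your write-up explicitly defers it to ``the classical analysis underlying Harris and Silverman's argument,'' which is the very thing to be proved. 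As it stands, the hard direction is therefore not established.
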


More generally, we have the following theorem of Abramovich and Harris \cite{AH}.

\begin{theorem}[Abramovich, Harris]
\label{AHthm}
Let $d\leq 4$ be a positive integer.  Let $C$ be a nonsingular projective curve over a number field $k$ with genus not equal to $7$ if $d=4$.  Then the set
\begin{align*}
\{P\in C(\kbar)\mid [L(P):L]\leq d\}
\end{align*}
is infinite for some finite extension $L$ of $k$ if and only if $C$ admits a map of degree $\leq d$, over $\kbar$, to $\mathbb{P}^1$ or an elliptic curve.
\end{theorem}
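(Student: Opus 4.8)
The reverse implication is the elementary construction already indicated in the text. Given $\phi\colon C\to \mathbb{P}^1$ or $\phi\colon C\to E$ of degree $e\le d$ defined over $\kbar$, pass to a finite extension $L$ over which $\phi$ and a base point are defined and over which the target has infinitely many rational points (automatic for $\mathbb{P}^1$, and achievable for an elliptic curve $E$ after enlarging $L$). Pulling back these rational points, each fiber consists of at most $e\le d$ geometric points, so each contributes a point of $C$ of degree $\le d$ over $L$, and the resulting set is infinite.

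For the forward implication, suppose the displayed set is infinite over some $L$; enlarging $L$ we may assume $C(L)\ne\emptyset$ and fix $O\in C(L)$. The plan is to transfer the problem to the symmetric power. The Galois orbit of a point of degree $e\le d$ is an effective $L$-rational divisor of degree $e$, i.e.\ a point of $\Sym^e(C)(L)$; adding $(d-e)O$ produces a point of $\Sym^d(C)(L)$, and this assignment is finite-to-one, so $\Sym^d(C)(L)$ is infinite. Let $u\colon \Sym^d(C)\to\Jac(C)$ be the Abel--Jacobi map for a chosen base point and set $W_d=u(\Sym^d(C))$. I distinguish cases according to the fibers of $u$. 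If some fiber $u^{-1}(y)$, $y\in W_d(L)$, contains infinitely many of our points, then since this fiber is the linear system $|D|=\mathbb{P}(H^0(C,D))$ for a degree-$d$ divisor $D$ of class $y$, it must be positive-dimensional; a pencil inside $|D|$ then yields a morphism $C\to\mathbb{P}^1$ of degree $\le d$, and we are done.

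Otherwise the images $u(Q)$ take infinitely many values in $W_d(L)\subset\Jac(C)(L)$. As $\Jac(C)(L)$ is finitely generated by the Mordell--Weil theorem and $W_d$ is closed, Faltings' theorem on rational points of subvarieties of abelian varieties (Theorem~\ref{Falt1}) shows that the Zariski closure of these points is a finite union of translates of abelian subvarieties contained in $W_d$; since the set is infinite, some translate $A+x\subseteq W_d$ with $\dim A\ge 1$ carries infinitely many of them. Let $V\subseteq u^{-1}(A+x)$ be a component dominating $A+x$. If $u|_V$ has positive-dimensional fibers then $u$ again has a positive-dimensional fiber, $C$ carries a $g^1_d$, and we obtain a degree $\le d$ map to $\mathbb{P}^1$. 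The remaining case is that $u|_V$ is generically finite onto the positive-dimensional abelian translate $A+x$.

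This last case is a purely geometric statement: \emph{if $W_d$ contains a translate of a positive-dimensional abelian variety over which $u$ is generically finite, then $C$ admits a map of degree $\le d$ to an elliptic curve.} The plan is to analyze the universal family of divisors $\{D_a\}_{a\in A}$ as a correspondence $\mathcal{D}\subset A\times C$, finite of degree $d$ over $A$; since the classes $[D_a]=x+a$ sweep out $A$, this realizes $A$ as an abelian subvariety of $\Jac(C)$ and, dually, induces a nonconstant morphism from $C$ to an abelian variety isogenous to $A$. After first reducing to the case where the family has no common base locus (so that $A+x\not\subseteq W_{d-1}+W_1$, which otherwise lets one descend to smaller $d$), one must bound the degree of this correspondence and force the abelian target to be one-dimensional. \textbf{This is the crux and the main obstacle}: it requires the fine classification of translates of abelian varieties in $W_d$ via Brill--Noether theory, and it is exactly here that the hypotheses $d\le 4$ and $g\ne 7$ enter---for larger $d$ there exist configurations, not arising from maps to $\mathbb{P}^1$ or to elliptic curves, that render the statement false, while a single exceptional family in genus $7$ must be excluded when $d=4$. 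I refer to \cite{AH} for this classification, which completes the proof.
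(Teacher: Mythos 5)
First, a point of reference: the paper does not prove Theorem \ref{AHthm} at all; it is quoted as background and attributed to \cite{AH}, so there is no internal proof to compare against. Judged as a standalone proof, your proposal has a genuine gap --- one you flag yourself. Your reductions are correct and in fact reproduce the skeleton of the actual Abramovich--Harris argument: the pullback construction for the reverse direction; Galois orbits plus a base point giving infinitely many points of $\Sym^d(C)(L)$; the Abel--Jacobi map $u$; Faltings' Theorem \ref{Falt1} producing a positive-dimensional abelian translate $A+x\subseteq W_d$ carrying infinitely many rational points; and the dichotomy between positive-dimensional fibers of $u$ (yielding a $g^1_d$, hence a degree $\le d$ map to $\mathbb{P}^1$) and $u$ generically finite over $A+x$. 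But the entire content of the theorem --- and the \emph{only} place the hypotheses $d\le 4$ and genus $\ne 7$ enter --- is the remaining assertion: that a positive-dimensional abelian translate in $W_d$ over which $u$ is generically finite forces a degree $\le d$ map from $C$ to an elliptic curve. Your correspondence sketch ($\mathcal{D}\subset A\times C$, dualizing to get a map from $C$ to an abelian variety isogenous to $A$) does not establish this: it bounds neither the degree of the induced map by $d$, nor forces $\dim A=1$, nor shows where $d\le 4$ is used. No soft argument can close this, since the statement is genuinely false for $d\ge 4$ in general by the Debarre--Fahlaoui counterexamples \cite{DF} cited in the paper. Writing ``I refer to \cite{AH} for this classification'' places a citation at the mathematical core, so the proposal is a proof outline around a quoted black box --- which, to be fair, is exactly the status the theorem has in the paper itself.

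It is worth noting that your framework (symmetric powers, Abel--Jacobi, Faltings) is the same engine the paper uses to prove its own main result, Theorem \ref{mtheorem3}. The instructive contrast is that there the \emph{integrality} hypothesis lets the argument terminate via Lemmas \ref{salem} and \ref{ark} and Theorem \ref{Voj2} without any Brill--Noether-type classification of abelian translates in $W_d$ --- which is precisely why Theorem \ref{mtheorem2} holds for every $d$ while the unrestricted analogue, Theorem \ref{AHthm}, breaks down for $d\ge 4$.
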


Given Theorem \ref{AHthm}, Abramovich and Harris naturally conjectured that a similar result would hold for all $d$.  However, Debarre and Fahlaoui \cite{DF} gave counterexamples to the conjecture for all $d\geq 4$.  Frey \cite{Frey} has shown, however, that the existence of infinitely many algebraic points of degree $d$ on a curve implies that there exists a map of degree $\leq 2d$ to $\mathbb{P}^1$.

\begin{theorem}[Frey]
Let $d$ be a positive integer.  Let $C$ be a nonsingular projective curve defined over a number field $k$.  If the set
\begin{align*}
\{P\in C(\kbar)\mid [k(P):k]\leq d\}
\end{align*}
is infinite, then $C$ admits a map over $k$ of degree $\leq 2d$ to $\mathbb{P}^1$.
\end{theorem}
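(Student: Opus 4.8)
The plan is to reduce the statement to a question about rational points on a symmetric power of $C$ and its image in the Jacobian, and then to apply Faltings' theorem together with a symmetrization that accounts for the factor of $2$ in the bound $2d$. First I would use pigeonhole: since there are infinitely many closed points of degree $\le d$, infinitely many share one fixed degree $e\le d$. Each such closed point is a $k$-rational effective divisor of degree $e$, i.e.\ a $k$-rational point of $\Sym^e(C)$, so I obtain infinitely many points of $\Sym^e(C)(k)$. Composing with the Abel--Jacobi map $\Sym^e(C)\to \operatorname{Pic}^e(C)$, which is defined over $k$, I get infinitely many points of $W_e(k)$, where $W_e\subseteq \operatorname{Pic}^e(C)$ is the image. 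It then suffices to produce a $k$-rational divisor class of degree $\le 2e$ whose complete linear system has dimension $\ge 1$: such a class yields a pencil defined over $k$, hence a nonconstant $k$-morphism $C\to\mathbb{P}^1$ of degree $\le 2e\le 2d$.

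Next I would dispose of a degenerate case. If the infinitely many points of $W_e(k)$ take only finitely many distinct values, then some fibre of $\Sym^e(C)\to\operatorname{Pic}^e(C)$ is infinite; this fibre is the complete linear system $|D|$ of a class $D$ lying in $\operatorname{Pic}^e(C)(k)$ (the image of a $k$-point under a $k$-morphism is a $k$-point), so $\dim|D|\ge 1$ and I am done with the even better bound $e\le d$. Hence I may assume $W_e(k)$ is infinite. Fixing one point $c_0\in W_e(k)$ and translating, the set $\{x-c_0 : x\in W_e(k)\}$ is an infinite subset of $J(k)$ contained in the subvariety $W_e-c_0\subseteq J=\Jac(C)$. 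Now I apply Faltings' theorem on subvarieties of abelian varieties (Theorem~\ref{Falt1}): the Zariski closure of this set is a finite union of translates of abelian subvarieties of $J$, each contained in $W_e-c_0$. Since the set is infinite, one translate $t+B$ with $\dim B\ge 1$ contains infinitely many of these rational points, and in particular a rational point, so after re-basing I may write it as $y+B$ with $y\in J(k)$. Setting $c=c_0+y\in W_e(k)$, I obtain a $k$-rational class $c\in\operatorname{Pic}^e(C)(k)$ and a positive-dimensional abelian subvariety $B\subseteq J$ (a priori only over $\kbar$) with $c+B\subseteq W_e$.

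The key step is then a symmetrization. For every $\beta\in B$ both $c+\beta$ and $c-\beta$ lie in $W_e$, so each is the class of an effective divisor $D_\beta$, respectively $D_{-\beta}$, of degree $e$. The sum $D_\beta+D_{-\beta}$ is an effective divisor of degree $2e$ whose class equals $2c$, which is independent of $\beta$ and, crucially, lies in $\operatorname{Pic}^{2e}(C)(k)$. I claim this family of divisors is infinite: if $\{D_\beta+D_{-\beta}\}$ were finite then, since a fixed effective divisor of degree $2e$ has only finitely many effective subdivisors of degree $e$, the set $\{D_\beta\}$ would be finite, whence the set of classes $\{c+\beta : \beta\in B\}$ would be finite, contradicting $\dim B\ge 1$. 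Therefore the complete linear system $|2c|$ contains infinitely many divisors, so $\dim|2c|\ge 1$. As $2c$ is a $k$-rational class, $|2c|$ is defined over $k$ and contains a two-dimensional $k$-subspace of sections, producing a nonconstant $k$-morphism $C\to\mathbb{P}^1$ of degree $\le \deg(2c)=2e\le 2d$, as required.

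I expect the main obstacle to be twofold: extracting from Faltings' theorem a positive-dimensional translated abelian subvariety passing through a $k$-rational point (so that the base class $c$ is rational), and ensuring the final map is defined over $k$ rather than merely over $\kbar$. The symmetrization resolves the second issue at once, since $B$ itself need not be defined over $k$: only the rationality of $c$, hence of $2c$, is used, and a complete linear system of a $k$-rational class of dimension $\ge 1$ automatically yields a morphism over $k$. This symmetrization --- pairing $c+\beta$ with $c-\beta$ --- is also precisely what forces the degree bound $2d$ rather than $d$, in contrast with the sharper bounds available in the low-degree cases of Theorem~\ref{AHthm}.
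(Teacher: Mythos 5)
The paper does not prove this theorem; it is quoted from Frey's article as background, so there is no internal proof to compare against. Judged on its own terms, your argument is essentially Frey's original one, and it is correct in outline: pigeonhole to a fixed degree $e\leq d$; view closed points of degree $e$ as $k$-points of $\Sym^e(C)$; push forward to $W_e\subseteq \operatorname{Pic}^e(C)$; either some fibre of $\Sym^e(C)\to\operatorname{Pic}^e(C)$ is infinite (giving a pencil of degree $\leq e$), or Faltings' Theorem \ref{Falt1} produces a translate $c+B\subseteq W_e$ with $\dim B\geq 1$ through a rational class $c$; then the symmetrization $\beta\mapsto D_\beta+D_{-\beta}$, together with your (correct) finiteness argument on effective subdivisors of a fixed divisor, shows $\dim|2c|\geq 1$ and accounts for the factor $2$. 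All the ingredients appear in the paper: Theorem \ref{Falt1} and the symmetric-power/Abel--Jacobi formalism of Section \ref{smain}.

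One step, which you explicitly lean on in your closing paragraph, is imprecise and false as literally stated: ``a complete linear system of a $k$-rational class of dimension $\geq 1$ automatically yields a morphism over $k$.'' A $k$-point of $\operatorname{Pic}^e(C)$ need not be the class of a $k$-rational divisor; the obstruction is a Brauer class, and for a non-representable class the complete linear system is a nontrivial Brauer--Severi variety over $k$ which need not furnish any $k$-morphism. For example, on a conic $C$ over $k$ with $C(k)=\emptyset$, the degree-one class is Galois-invariant (a $k$-point of $\operatorname{Pic}^1$) and its linear system over $\kbar$ is one-dimensional, yet there is no degree-one $k$-morphism $C\to\mathbb{P}^1$, since such a map would be an isomorphism and force $C(k)\neq\emptyset$. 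In your situation the repair is immediate, but it must be made: the infinite set fed into Faltings' theorem should be the set of translates $x-c_0$ with $x$ ranging over the images of the $k$-points of $\Sym^e(C)$ (so each $x$ is the class of an honest $k$-rational effective divisor), and $y$, hence $c=c_0+y$, must be chosen among these points, which is possible because the chosen translate $y+B$ contains infinitely many of them. Then $c=[D_1]$ with $D_1$ a $k$-rational effective divisor of degree $e$, the space $H^0(C,\O_C(2D_1))$ is a $k$-vector space of dimension $\geq 2$ by Galois descent (its dimension does not change under base extension to $\kbar$), and any nonconstant $\phi\in k(C)$ with $\dv(\phi)+2D_1\geq 0$ gives the required $k$-morphism of degree $\leq 2e\leq 2d$. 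The same remark applies, harmlessly, in your degenerate case, where the infinite fibre already contains infinitely many $k$-points of $\Sym^e(C)$ and hence a $k$-rational representative.
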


\section{Definitions}

Let $k$ be a number field.  We let $\mathcal{O}_k$ denote the ring of integers of $k$.  We have a canonical set $M_k$ of places of $k$ consisting of one place for each prime ideal $\mathfrak{p}$ of $\mathcal{O}_k$, one place for each real embedding $\sigma:k \to \mathbb{R}$, and one place for each pair of conjugate embeddings $\sigma,\overline{\sigma}:k \to \mathbb{C}$.  Let $k_v$ denote the completion of $k$ with respect to $v$.    For a finite set of places $S$ of $k$ containing the archimedean places, we let $\O_{k,S}$ denote the ring of $S$-integers of $k$, $\overline{\O}_{k,S}$ denote the integral closure of $\O_{k,S}$ in $\kbar$, and $\O_{k,S}^*$ denote the group of $S$-units of $k$.

There are various, essentially equivalent, notions of integral points on varieties.  It will be most convenient for us to use the language of $(D,S)$-integral points, which we now recall.  We will also briefly discuss the relation with other notions of integral points that we will occasionally use.

Let $D$ be a Cartier divisor on a projective variety $X$, with both $X$ and $D$ defined over a number field $k$.  We let $\Supp D$ denote the support of $D$.  From the theory of heights, for each place $v\in M_k$ we can associate to $D$ a local Weil function (or local height function) $\lambda_{D,v}:X(\kbar_v)\setminus \Supp D \to \mathbb{R}$, unique up to a bounded function (extending $|\cdot|_v$ to an absolute value on $\kbar_v$ in some way).  When $D$ is effective, the Weil function $\lambda_{D,v}$ gives a measure of the $v$-adic distance of a point to $D$, being large when the point is close to $D$.  By choosing embeddings $k\to \kbar_v$ and $\kbar\to \kbar_v$, we may also think of $\lambda_{D,v}$ as a function on $X(k)\setminus \Supp D$ or $X(\kbar)\setminus \Supp D$.  A global Weil function consists of a collection of local Weil functions, $\lambda_{D,v}$, $v\in M_k$, satisfying certain reasonable conditions as $v$ varies.  We refer the reader to \cite{BG, HinS, Lan, V2} for details on Weil functions.

\begin{definition}
\label{intdef}
Let $D$ be an effective Cartier divisor on a projective variety $X$, with both $X$ and $D$ defined over a number field $k$.  Let $S$ be a finite set of places of $k$ containing the archimedean places.  Let $R\subset X(\kbar)\setminus \Supp D$.  Then $R$ is called a set of $(D,S)$-integral points on $X$ if there exist constants $c_v$, $v\in M_k$, with $c_v=0$ for all but finitely many $v$, and a global Weil function $\lambda_{D,v}$ such that for all $P\in R$, all $v\in M_k\setminus S$, and all embeddings of $\kbar$ in $\kbar_v$,
\begin{align*}
\lambda_{D,v}(P)\leq c_v.
\end{align*}
\end{definition}

When $D$ is very ample, $(D,S)$-integral points coincide with the natural notion of $S$-integral points coming from affine models of $X\setminus \Supp D$ (see \cite[Lemma 1.4.1]{V2}).

\begin{theorem}
Let $D$ be a very ample effective divisor on a projective variety $X$, with both $X$ and $D$ defined over a number field $k$.  Let $R\subset X(\kbar)\setminus \Supp D$.  Then $R$ is a set of $(D,S)$-integral points on $X$ if and only if there exists an affine embedding $X\setminus \Supp D\hookrightarrow \mathbb{A}^n$ such that $R\subset (X\setminus \Supp D)(\kbar)\cap \mathbb{A}^n(\overline{\O}_{k,S})$.
\end{theorem}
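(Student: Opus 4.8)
The plan is to derive both implications from the \emph{functoriality} of Weil functions together with the elementary fact that any two affine models of $X\setminus\Supp D$ have coordinate rings generated by regular functions that are polynomials in one another. Since $D$ is very ample, I would first fix a $k$-basis $s_0,\ldots,s_n$ of $H^0(X,\O_X(D))$ with $\dv(s_0)=D$, giving a closed embedding $\psi\colon X\hookrightarrow\mathbb{P}^n$ under which $D=\psi^*H_0$ for the hyperplane $H_0=\{x_0=0\}$. The functions $f_i=s_i/s_0$ are regular on $X\setminus\Supp D$ and define a distinguished affine embedding $X\setminus\Supp D\hookrightarrow\mathbb{A}^n=\mathbb{P}^n\setminus H_0$. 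The key input is that the standard local Weil function for a hyperplane pulls back, so that for a suitable choice of global Weil function one has
\[
\lambda_{D,v}(P)=\log\max\{1,|f_1(P)|_v,\ldots,|f_n(P)|_v\}+O(1),
\]
where the $O(1)$ is $M_k$-bounded (zero for all but finitely many $v$). Thus $(D,S)$-integrality of $R$ is equivalent to a uniform bound $|f_i(P)|_v\le C_v$ for all $P\in R$, all $v\notin S$ and all $i$, with $C_v=1$ for all but finitely many $v$.

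For the implication from an affine model to $(D,S)$-integrality, suppose $R\subset\mathbb{A}^m(\overline{\O}_{k,S})$ via coordinate functions $g_1,\ldots,g_m\in\Gamma(X\setminus\Supp D,\O)$. The embedding realizes $X\setminus\Supp D$ as a closed subvariety of $\mathbb{A}^m$, so each $f_i$ lies in the coordinate ring $k[g_1,\ldots,g_m]$ and is therefore a polynomial in the $g_j$ with coefficients in $k$. If $P\in R$ then $|g_j(P)|_v\le 1$ for every $v\notin S$, and the ultrametric inequality at non-archimedean $v$ bounds $|f_i(P)|_v$ by the maximum of the $v$-absolute values of the finitely many coefficients; this maximum is $\le 1$ outside $S$ and a finite set of places. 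Hence $\log\max\{1,|f_i(P)|_v\}\le c_v$ with $c_v=0$ for all but finitely many $v\notin S$, and the displayed formula shows $R$ is $(D,S)$-integral.

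For the converse I would start from the uniform bound $|f_i(P)|_v\le C_v$ ($v\notin S$) supplied by $(D,S)$-integrality, where $C_v>1$ only for $v$ in a finite set $T\subset M_k\setminus S$. The distinguished embedding need not make the $f_i(P)$ genuine $S$-integers at the places of $T$, and repairing this is the main obstacle: $S$ is \emph{fixed}, so I cannot simply enlarge it. The fix is to rescale, replacing each coordinate $f_i$ by $af_i$ for a single $a\in k^*$; as a homothety of $\mathbb{A}^n$ this yields another affine embedding. Using that $S$ is nonempty (it contains the archimedean places), strong approximation produces an element of $\O_{k,S}$ that is an $S$-integer with strictly positive valuation at each place of $T$ and valuation zero elsewhere outside $S$; call it $a_0$, and set $a=a_0^N$. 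Taking $N$ large arranges $|a|_vC_v\le 1$ for all $v\in T$ while keeping $|a|_v\le 1$ for every other $v\notin S$. Then $|af_i(P)|_v\le 1$ for every $v\notin S$ and every $P\in R$, so each $af_i(P)\in\overline{\O}_{k,S}$ and $R$ lies in $\mathbb{A}^n(\overline{\O}_{k,S})$ under the rescaled embedding, completing the proof.
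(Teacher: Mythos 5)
Correct. The paper does not actually prove this statement --- it is quoted from Vojta (Lemma 1.4.1 of \emph{Diophantine Approximations and Value Distribution Theory}) --- and your argument is essentially the standard proof of that cited lemma: identify $\lambda_{D,v}$ with $\log\max\{1,|f_1|_v,\ldots,|f_n|_v\}$ via the very ample embedding, express any other affine model's coordinates polynomially in the $f_i$ for one direction, and rescale by a constant homothety for the other. One small inaccuracy: strong approximation does not give an $a_0\in\O_{k,S}$ with valuation \emph{exactly} zero at every place outside $S\cup T$ (only $|a_0|_w\le 1$ there, i.e.\ valuation $\geq 0$); but that weaker conclusion is all your argument uses, and such an $a_0$ exists trivially --- e.g.\ a nonzero rational integer divisible by the residue characteristics of the places in $T$ --- so the gap is cosmetic.
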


There is also the scheme-theoretic notion of $S$-integral points which is used, for instance, in the statement of Theorem \ref{Voj1}.  Again, there is a natural relationship between sets of $(D,S)$-integral points on $X$ and $S$-integral points on models of $X\setminus \Supp D$ over $\O_{k,S}$.  Since this will not be directly used in our proofs, we refer the reader to \cite[Prop.\ 1.4.7]{V2} for the precise relationship.

We define the irregularity of a projective variety $X$ over a field of characteristic $0$ to be $q(X)=\dim H^1(X',\mathcal{O}_{X'})=\dim H^0(X',\Omega^1_{X'})$, where $X'$ is a desingularization of $X$.  This is independent of the choice of $X'$ and is equal to the dimension of the Albanese variety of $X'$.  Suppose now that $X$ is nonsingular.  If $D$ is a divisor on $X$, we let $c_1(D)$ denote the first Chern class of $D$.  For divisors $D$ and $E$ on $X$, we write $D\leq E$ if $E-D$ is an effective divisor.  We say that the effective divisors $D_1,\ldots, D_q$ on $X$ are in general position if for any subset $I\subset \{1,\ldots, q\}$, $|I|\leq \dim X+1$, we have $\codim \cap_{i\in I}\Supp D_i\geq |I|$.  If $G$ is an abelian group and $g_i$, $i\in I$, are elements of $G$, we let $\rk \{g_i\}_{i\in I}$ denote the (free) rank of the subgroup of $G$ generated by the elements $g_i,i\in I$. 

\section{Integral points on abelian and semiabelian varieties}
\label{ssemi}
In this section we recall the fundamental results on integral points on abelian and semiabelian varieties that will be used in the proof of our main theorem.  First, we recall Faltings' result \cite{Fal1,Fal2} on the structure of rational points on subvarieties of abelian varieties.

\begin{theorem}[Faltings]
\label{Falt1}
Let $X$ be a closed subvariety of an abelian variety $A$, with both $X$ and $A$ defined over a number field $k$.  Then the set $X(k)$ equals a finite union $\cup B_i(k)$, where each $B_i$ is a translated abelian subvariety of $A$ contained in $X$.
\end{theorem}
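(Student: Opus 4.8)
The statement is Faltings' theorem on the Mordell--Lang conjecture for abelian varieties \cite{Fal1,Fal2}, and the plan is to prove it by combining an arithmetic finiteness input with a geometric dimension induction. The starting point is the Mordell--Weil--N\'eron theorem: $A(k)$ is finitely generated, so $\Gamma=A(k)$ has finite rank and $X(k)=X\cap\Gamma$. I would then isolate the essential arithmetic content in the following \emph{strong finiteness} statement: there are finitely many translated abelian subvarieties $a_1+B_1,\ldots,a_r+B_r\subseteq X$ such that $(X\cap\Gamma)\setminus\bigcup_i(a_i+B_i)$ is finite. Granting this, the full structure statement follows by induction on $\dim X$, the base case $\dim X=0$ being trivial. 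If some $a_i+B_i$ equals $X$ then $X$ is itself a translated abelian subvariety and we are done; otherwise each $a_i+B_i$ is a proper closed subvariety, so applying the induction hypothesis to each $a_i+B_i$ (with the induced finite-rank group $\Gamma\cap(a_i+B_i)$) expresses $(a_i+B_i)\cap\Gamma$ as a finite union of translated abelian subvarieties, while the finitely many leftover points are themselves $0$-dimensional translated abelian subvarieties. Reassembling gives $X(k)=\bigcup_j B_j(k)$ as required. One may streamline the bookkeeping using that the positive-dimensional translated abelian subvarieties contained in $X$ have, by the structure theorem of Ueno and Kawamata, finitely many maximal members, but the induction above does not strictly require this.

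The heart of the matter is thus the strong finiteness statement, and here the plan is to run the Diophantine approximation method on $A$ in the form developed by Vojta and Faltings, generalizing Vojta's proof of the Mordell conjecture. I would work with the N\'eron--Tate canonical height $\hat h$, a positive-definite quadratic form on $\Gamma\otimes\mathbb{R}$ that endows the points of $\Gamma$ with genuine lengths and angles. Two ingredients drive the argument. The first is Mumford's gap principle: points of $X\cap\Gamma$ of large height that are nearly parallel in $\Gamma\otimes\mathbb{R}$ must have heights growing at a fixed geometric rate, so from an infinite set of points avoiding all the $a_i+B_i$ one extracts a sequence $P_1,P_2,\ldots$ with rapidly increasing heights and pairwise angles bounded away from $0$. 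The second is an auxiliary-section construction on a product $A^m$ for suitably large $m$: using Riemann--Roch one builds a global section of a carefully weighted ample line bundle vanishing to high index along $X^m$ at the chosen tuple of points, and one bounds its index at a later tuple from below via a derivative and arithmetic estimate that exploits the near-parallelism together with the height inequality. The resulting index would be too large were the section's high-index locus arbitrary; Faltings' Product Theorem forces that locus to be a product of translates, which upon descending to a single factor shows that the relevant points accumulate along a positive-dimensional translated abelian subvariety contained in $X$, necessarily one of the $a_i+B_i$. This contradiction shows that only finitely many points of $X\cap\Gamma$ lie outside the union of these translates.

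I expect the Product Theorem step to be the principal obstacle, on two fronts. Proving the Product Theorem is itself a substantial undertaking, demanding a delicate quantitative analysis of the vanishing of sections of line bundles on products of projective varieties. Deploying it is equally delicate: one must track the interplay among the weights on the factors of $A^m$, the index of the auxiliary section, and the canonical heights and angles of the $P_i$, and then correctly read the product structure of the degeneracy locus as the geometric conclusion that $X$ contains a translated abelian subvariety through the relevant points. Choosing $m$ and the weights so that the upper bound from the Product Theorem genuinely contradicts the lower bound from the gap principle is where the real difficulty lies; by comparison, the surrounding reductions (Mordell--Weil, the stabilizer/dimension induction, and the Ueno--Kawamata structure theorem) are essentially formal.
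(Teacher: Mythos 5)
This statement is not proved in the paper at all: it is Faltings' theorem (the Mordell--Lang conjecture for abelian varieties), recalled in Section~\ref{ssemi} from \cite{Fal1,Fal2} and used as a black-box arithmetic input in the proof of Theorem~\ref{mtheorem3}. So there is no internal argument to compare you against; the only fair benchmark is Faltings' published proof. Measured against that, your outline is faithful: the Mordell--Weil reduction to a finitely generated group $\Gamma=A(k)$, the isolation of a strong finiteness statement (all but finitely many points of $X\cap\Gamma$ lie on finitely many translated abelian subvarieties contained in $X$), and the plan to prove it by Vojta's method --- N\'eron--Tate heights, Mumford's gap principle to extract nearly parallel points of rapidly growing height, an auxiliary section of a weighted line bundle on $A^m$ with high index along the chosen tuple, and the Product Theorem to force the high-index locus to be a product of translates --- is exactly the architecture of \cite{Fal1,Fal2}. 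One remark: your dimension induction is redundant. Once the strong finiteness statement is granted, each $a_i+B_i$ is already a translated abelian subvariety contained in $X$ and the finitely many exceptional points are zero-dimensional such subvarieties, so $X(k)=\bigcup_i(a_i+B_i)(k)\cup F$ is the desired decomposition with no further work; there is no need to re-decompose $(a_i+B_i)\cap\Gamma$.

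As a proof, however, the proposal has a genuine gap, and you name it yourself: the entire analytic core is described as a goal rather than carried out. The Riemann--Roch construction of the auxiliary section with controlled norm and index, the arithmetic lower bound on its index at a later tuple (which in Faltings' treatment rests on nontrivial input --- Siegel-lemma-type estimates and arithmetic intersection theory), the Product Theorem itself, and the calibration of $m$ and the weights so that the gap-principle lower bound actually contradicts the Product Theorem upper bound: each of these is a substantial piece of mathematics, and together they constitute the content of two research papers. What you have written is an accurate and well-organized roadmap of the known proof, with the correct identification of where the difficulty lies, but it is not a proof; within the scope of this paper the honest move is the one the author makes, namely to cite \cite{Fal1,Fal2} and treat the theorem as a known input.
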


Vojta \cite{V1} generalized Faltings' theorem to cover integral points on closed subvarieties of semiabelian varieties.

\begin{theorem}[Vojta]
\label{Voj1}
Let $X$ be a closed subvariety of a semiabelian variety $A$, with both $X$ and $A$ defined over a number field $k$.  Let $S$ be a finite set of places of $k$ containing the archimedean places.  Let $\mathcal{X}$ be a model for $X$ over $\O_{k,S}$.  Then the set $\mathcal{X}(\O_{k,S})$ equals a finite union $\cup \mathcal{B}_i(\O_{k,S})$, where each $\mathcal{B}_i$ is a subscheme of $\mathcal{X}$ whose generic fiber $B_i$ is a translated semiabelian subvariety of $A$.
\end{theorem}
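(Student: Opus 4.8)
The plan is to reduce the structure theorem, by Noetherian induction, to a single finiteness assertion: if $Y\subseteq A$ is a closed subvariety containing no positive-dimensional translated semiabelian subvariety of $A$, then every set of integral points on $Y$ (equivalently, $\mathcal{Y}(\O_{k,S})$ for a model $\mathcal{Y}$) is finite. Granting this, one argues by descending induction on $\dim X$: the integral points lying on some positive-dimensional translated semiabelian subvariety $B\subseteq X$ are accounted for by a piece $\mathcal{B}$, while the integral points lying on no such $B$ form a set whose special locus is empty and which is therefore finite — hence itself a collection of zero-dimensional allowed pieces. That only finitely many maximal such $B$ occur is part of the same inductive package. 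Throughout I would use the structure of semiabelian varieties: $A$ fits in an exact sequence
\[
0 \to T \to A \xrightarrow{\pi} A_0 \to 0,
\]
with $T\cong\mathbb{G}_m^t$ a torus and $A_0$ an abelian variety.

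Next I would set up the dévissage along this sequence. Given a set $R$ of integral points on $X$, its image $\pi(R)$ consists of rational points on the subvariety $\pi(X)\subseteq A_0$, the integrality hypothesis being carried entirely by the toric directions. By Faltings' Theorem \ref{Falt1}, the Zariski closure of $\pi(X)(k)$ is a finite union of translated abelian subvarieties of $A_0$; pulling these back by $\pi$ replaces $A$ by extensions of \emph{smaller} abelian varieties by $T$, and intersecting with $X$ stratifies the problem into finitely many pieces to which induction on $\dim A$ applies. The two genuine base cases are: (i) $A=A_0$ abelian, which is exactly Faltings' Theorem \ref{Falt1}; and (ii) $A=T=\mathbb{G}_m^t$ a pure torus, where integral points are tuples of $S$-units, the translated semiabelian subvarieties are translates of subtori cut out by monomial equations, and the finiteness for empty special locus is the theorem on the $S$-unit equation in several variables. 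Organizing everything this way reduces the whole theorem to the Diophantine approximation estimate underlying the key finiteness lemma.

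The hard part — and the genuinely new content beyond Faltings' theorem — is that finiteness lemma for a subvariety $Y$ with empty special locus. Here I would pass to an equivariant smooth compactification $\overline{A}$ of $A$ with boundary divisor $\partial=\overline{A}\setminus A$, so that $S$-integrality of points of $Y$ becomes a statement about their $v$-adic proximity to $\partial$ for $v\notin S$. The aim is a quantitative inequality of Schmidt subspace / Faltings product-theorem type: for every $\epsilon>0$ there is a proper closed $Z\subsetneq\overline{Y}$ such that every integral point $P\in Y(k)\setminus Z$ satisfies an inequality forcing its canonical height to be bounded, after which Northcott's finiteness theorem finishes the bounded-height set. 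This requires controlling both the local approximation to $\partial$ and the canonical height in the torus directions, and it subsumes Faltings' finiteness for the complement of an ample divisor on an abelian variety as a special case.

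The main obstacle is precisely the interaction between this approximation inequality and the exceptional locus $Z$. A naive application of the subspace-theorem machinery only places integral points inside a proper subvariety rather than giving outright finiteness, so one must iterate: reapply the argument on $Z$, and use the hypothesis that $Y$ contains no translated semiabelian subvariety to guarantee that the descending chain of exceptional loci terminates without ever trapping infinitely many integral points. Making this iteration converge — i.e. showing the combinatorics of the boundary stratification of $\overline{A}$ together with the emptiness of the special locus rules out accumulation — is the delicate step that distinguishes the semiabelian case from the abelian one and constitutes the substance of Vojta's argument.
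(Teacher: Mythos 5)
The paper does not prove this statement at all: it is quoted as Vojta's theorem with a citation to \cite{V1}, and is used purely as a black box (alongside Faltings' Theorems \ref{Falt1} and \ref{Fal2} and the unit equation, Theorem \ref{sunit}) in the proof of Theorem \ref{mtheorem3}. So there is no proof in the paper to compare yours against; your proposal has to stand on its own as a proof of Vojta's theorem, and it does not.

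The concrete gap is in your d\'evissage along $0\to T\to A\xrightarrow{\ \pi\ } A_0\to 0$. Applying Faltings' Theorem \ref{Falt1} to $\pi(X)\subseteq A_0$ and pulling back only reduces $\dim A$ when the translated abelian subvarieties produced by Faltings are \emph{proper} in $A_0$. In the essential case --- $\pi(X)=A_0$ with $\pi(X)(k)$ Zariski dense (say $A_0$ an elliptic curve of positive rank and $X\subset\mathbb{G}_m^2\times A_0$ a surface surjecting onto $A_0$) --- Faltings returns $A_0$ itself, the pullback is all of $A$, and your induction on $\dim A$ never starts. Handling this case fiberwise does not work either: the unit equation gives finiteness of the relevant integral points in each fiber of $\pi$, but there are infinitely many $k$-rational fibers, and finiteness per fiber does not preclude infinitely many integral points of $X$ spread over infinitely many fibers while lying on no translated semiabelian subvariety contained in $X$; ruling that out is exactly the content of the theorem, and it is where the toric and abelian directions interact irreducibly. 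Your proposal defers precisely this to ``a quantitative inequality of Schmidt subspace / Faltings product-theorem type'' on a compactification of $A$, together with a termination argument for the exceptional loci --- but that inequality (Vojta's extension of Faltings' product-theorem method to compactified semiabelian varieties) and the control of its exceptional subvariety constitute the entire substance of Vojta's paper, not a lemma one may assume. As written, the proposal is a road map whose two load-bearing steps --- the dimension reduction and the key finiteness lemma --- respectively fail in the crucial case and are left unproved.
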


As noted by Vojta \cite[Cor.\ 0.3]{V1}, if an effective divisor $D$ on a projective variety $X$ has enough irreducible components, relative to certain geometric invariants of $X$, then $X\setminus \Supp D$ embeds into a semiabelian variety and one can use Theorem~\ref{Voj1} to show that any set of $k$-rational $(D,S)$-integral points on $X$ is not Zariski dense.  We will use this result in the following form (see \cite[Lemma 4.4]{NW}\footnote{This lemma is incorrectly stated in \cite{NW} with a term $\#\{D_i|_W\}_{i=1}^q$ instead of $\rk\{D_i|_W\}_{i=1}^q$.  The same proof in \cite{NW} gives, however, the statement in Lemma \ref{salem}.  An equivalent, but slightly different formulation, is given in \cite[Th.~ 4.9.7, Th.~9.7.5]{NW2}.} and \cite[Th.~4.9.7, Th.~9.7.5]{NW2}).
\begin{lemma}
\label{salem}
Let $D_1,\ldots, D_q$ be effective divisors on a nonsingular projective variety $V$.  Let $W\subset V$, $W\not\subset \Supp D_i$ for all $i$, be a subvariety of $V$ such that there exists a Zariski dense set of $k$-rational $(\sum_{i=1}^q D_i|_W,S)$-integral points on $W$.  Then
\begin{align*}
\rk\{D_i|_W\}_{i=1}^q+q(W)\leq \dim W +\rk\{c_1(D_i)\}_{i=1}^q.
\end{align*}
\end{lemma}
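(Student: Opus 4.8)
The plan is to deduce Lemma~\ref{salem} from Vojta's Theorem~\ref{Voj1} by embedding a dense open subset of $W$ into a semiabelian variety whose dimension records the two ranks in the statement. Write $r=\rk\{D_i|_W\}_{i=1}^q$ and $s=\rk\{c_1(D_i|_W)\}_{i=1}^q$, so that the claimed inequality is equivalent to $q(W)+(r-s)\le\dim W$. First I would reduce to the case that $W$ is nonsingular and projective: passing to a desingularization $\pi\colon W'\to W$, I pull the $D_i|_W$ back to $W'$, note that all but finitely many of a Zariski dense set of integral points lie in the smooth locus and hence lift to a Zariski dense set of integral points on $W'$, and observe that $\pi^*$ is injective on $\mathrm{Pic}\otimes\mathbb{Q}$ and on N\'eron--Severi, so that $r$, $s$, and $q(W)=q(W')$ are unchanged. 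Thus I may assume $W$ is smooth projective, and set $U=W\setminus\Supp D$ with $D=\sum_i D_i$.

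The core of the argument is the construction of the semiabelian variety. Let $A$ be the Albanese variety of $W$, of dimension $q(W)$, with Albanese morphism $a\colon W\to A$. Comparing the two rank computations, the subgroup of $\Div(W)$ generated by the $D_i|_W$ maps onto a rank-$s$ subgroup of N\'eron--Severi under $c_1$, so its kernel has rank $r-s$; concretely, I can choose combinations $E_1,\dots,E_{r-s}$, with $E_j=\sum_i n_{ji}D_i|_W$, that are independent in $\Div(W)$, supported on $\Supp D$, and numerically trivial (each $c_1(E_j)=0$). Numerical triviality places each class $[E_j]$ in $\mathrm{Pic}^0(W)=\mathrm{Ext}^1(A,\mathbb{G}_m)$, and the $r-s$ resulting extension classes define a semiabelian variety $G$ sitting in $1\to\mathbb{G}_m^{\,r-s}\to G\to A\to 0$, of dimension $q(W)+(r-s)$. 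The rational sections $s_{E_j}$ with $\dv(s_{E_j})=E_j$, which are regular and nonvanishing on $U$, furnish a lift $\alpha\colon U\to G$ of $a$ whose torus coordinates are essentially the $s_{E_j}$; independence of the $E_j$ in $\Div(W)$ makes the associated logarithmic forms $ds_{E_j}/s_{E_j}$ linearly independent, which guarantees that $\alpha(U)$ generates the full torus factor, and hence, together with $a(W)$ generating $A$, that $\alpha(U)$ generates $G$.

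Next I would transport the integral points. A $(\sum_i D_i|_W,S)$-integral point $P\in W(k)$ lies in $U$, and because each $E_j$ is supported on $\Supp D$ the local Weil functions $\lambda_{E_j,v}(P)$ are bounded outside $S$; equivalently the values $s_{E_j}(P)$ are $S$-units up to a bounded factor. Since points of the projective part $A$ are automatically integral, it follows that, after enlarging $S$ to some $S'$, the image $\alpha(P)$ is an $S'$-integral point of $G$ for a fixed model. Applying Theorem~\ref{Voj1} to the closure $X=\overline{\alpha(U)}\subseteq G$, the image of the integral point set lies in a finite union of translated semiabelian subvarieties contained in $X$. By hypothesis the integral points are Zariski dense in $W$, hence their images are dense in $X$, forcing $X$ itself to be a single translated semiabelian subvariety; as $X$ contains $\alpha(u_0)=0$ it is in fact a semiabelian subgroup, and since it contains the generating set $\alpha(U)$ we conclude $X=G$. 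Therefore $\alpha$ is dominant, $\dim G\le\dim U=\dim W$, and so $q(W)+(r-s)\le\dim W$, which is the desired inequality.

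The main obstacle is the middle step: producing the morphism $\alpha\colon U\to G$ with the exact dimension $q(W)+(r-s)$ and verifying its two required properties --- that it carries $(D,S)$-integral points to integral points of $G$ (this is where the integrality hypothesis is essential, via the $E_j$ being supported on $\Supp D$), and that its image generates $G$ so that no torus directions are lost. Once $\alpha$ is in hand with these properties, the appeal to Theorem~\ref{Voj1} and the dimension count are formal. I also expect the reduction to smooth projective $W$, and the bookkeeping matching $\dim G$ to $r-s$ rather than to the number of prime components of $\Supp D$ (which is what the full quasi-Albanese of $U$ would record), to require some care.
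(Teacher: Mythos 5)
Your proposal is correct and takes essentially the same route as the proof this paper relies on: the paper does not prove Lemma~\ref{salem} itself but cites Noguchi--Winkelmann (following Vojta's Cor.~0.3), and that argument is precisely yours --- pass to a desingularization, map $W\setminus\Supp(\sum_i D_i|_W)$ into a semiabelian extension of the Albanese by a torus whose dimension is $\rk\{D_i|_W\}_{i=1}^q-\rk\{c_1(D_i|_W)\}_{i=1}^q$, transport the integral points, and apply Theorem~\ref{Voj1} together with a dimension count. Two small points in your reduction deserve care but are easily repaired: the integral points lying in the smooth locus are Zariski dense because $W$ is irreducible (not because only finitely many can be singular), and preserving the rank in $\Div(W)$ under pullback to the desingularization requires an argument (e.g.\ via the projection formula on cycles) rather than injectivity of $\pi^*$ on $\mathrm{Pic}\otimes\mathbb{Q}$, since the rank in question is taken with no equivalence relation imposed.
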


Note that $\rk\{D_i|_W\}_{i=1}^q$ is the rank of the group generated by $D_i|_W$, $i=1,\ldots, q$, as a subgroup of $\Div(W)$, the group of Cartier divisors on $W$ {\em with no equivalence relation imposed}.

We will also need a result on integral points on the complement of an effective divisor on an abelian variety.  In the case when the divisor is ample, Faltings \cite{Fal1} proved the finiteness of integral points on the complement.

\begin{theorem}[Faltings]
\label{Fal2}
Let $A$ be an abelian variety defined over a number field $k$.  Let $S$ be a finite set of places of $k$ containing the archimedean places and let $D$ be an ample effective divisor on $A$.  Then any set of $k$-rational $(D,S)$-integral points on $A$ is finite.
\end{theorem}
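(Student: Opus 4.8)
The plan is to reduce the finiteness of integral points to a Diophantine approximation inequality on $A$ and then to Northcott's theorem; the actual proof of that inequality is the deep content due to Faltings and is sketched last. It should be stressed that this cannot be obtained from the structure results already recorded: applying Lemma \ref{salem} with $q=1$ and $V=W=A$ produces only $\rk\{D\}+q(A)\leq\dim A+\rk\{c_1(D)\}$, i.e.\ $1+\dim A\leq\dim A+1$, which holds with equality and yields nothing. A single ample divisor is invisible to the semiabelian machinery of Theorems \ref{Falt1} and \ref{Voj1}, so ampleness must be exploited directly.

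Fix a global Weil function $\lambda_{D,v}$ and set $m_{D,S}(x)=\sum_{v\in S}\lambda_{D,v}(x)$ and $h_D(x)=\sum_{v\in M_k}\lambda_{D,v}(x)+O(1)$. By the definition of $(D,S)$-integrality, a set $R$ of $k$-rational $(D,S)$-integral points satisfies $\sum_{v\notin S}\lambda_{D,v}(x)=O(1)$, whence
\[
m_{D,S}(x)=h_D(x)+O(1)\qquad(x\in R).
\]
The task is thus to show that the proximity $m_{D,S}$ is negligible against the height, which will contradict this identity for all but finitely many $x$.

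The key input is Faltings' Diophantine approximation inequality: for every $\epsilon>0$ there is a constant $c$ with
\[
m_{D,S}(x)\leq\epsilon\,h_D(x)+c\qquad\text{for all }x\in A(k)\setminus\Supp D
\]
(more precisely, outside a finite union of translated abelian subvarieties, on each of which one restricts $D$ and descends to a lower-dimensional abelian variety, so that a Noetherian induction with Northcott at the base reduces to this generic case). Granting the inequality with $\epsilon<1$ and combining it with the integrality identity gives $(1-\epsilon)h_D(x)\leq O(1)$; since $D$ is ample, $h_D$ is a genuine height, and Northcott's theorem forces $R$ to be finite.

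Everything is therefore concentrated in the inequality, and this is the main obstacle. I would argue by contradiction from an infinite sequence with $m_{D,S}(x_i)>\epsilon\,h_D(x_i)$ and $h_D(x_i)\to\infty$, following the Thue--Siegel--Roth--Vojta method as adapted by Faltings. On a high power $A^m$ one constructs an auxiliary global section of a suitable ample line bundle built from $D$ and the coordinate projections, with controlled height; the good approximation of each coordinate to $D$ at the places of $S$ forces the index of this section at the point $(x_{i_1},\ldots,x_{i_m})$ to be large, while Faltings' Product Theorem (the generalization of the lemmas of Dyson and Roth to products of abelian varieties) bounds the index from above. For suitable $m$ and weights the two bounds are incompatible, giving the contradiction. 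The construction and control of the index on $A^m$ via the Product Theorem is exactly where the elementary arguments break down and Faltings' full machinery is required; for this reason Theorem \ref{Fal2} is imported as a cited result rather than reproved here.
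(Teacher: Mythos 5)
The paper does not prove this statement: Theorem \ref{Fal2} is recalled in Section \ref{ssemi} as a known result of Faltings \cite{Fal1}, so there is no internal proof to compare yours against. Your proposal is a correct reconstruction of Faltings' own argument---the identity $m_{D,S}(x)=h_D(x)+O(1)$ on a set of $(D,S)$-integral points, the approximation inequality combined with Northcott and induction over translated abelian subvarieties (where $D$ restricts to an ample divisor), and the Product Theorem as the deep core---and, like the paper (which invokes a version of exactly this approximation inequality from \cite{Fal1} in the proof of the final theorem of Section \ref{sopen}), you rightly import that core by citation rather than attempting to reprove it.
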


If the divisor is not ample, then we only have a degeneracy statement.

\begin{theorem}
\label{Voj2}
Let $A$ be an abelian variety defined over a number field $k$.  Let $S$ be a finite set of places of $k$ containing the archimedean places and let $D$ be a nontrivial effective divisor on $A$.  Then any set of $k$-rational $(D,S)$-integral points on $A$ is not Zariski dense in $A$.
\end{theorem}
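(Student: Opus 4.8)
The plan is to reduce the general case to the ample case handled by Faltings' Theorem~\ref{Fal2}, by quotienting $A$ by the translation stabilizer of $D$. First I would set $G=\{a\in A: t_a^*D=D\}$, the subgroup of points whose translation $t_a$ fixes $D$ as a divisor. This is a closed algebraic subgroup of $A$, so it has finitely many connected components; let $B=G^0$ be its identity component, an abelian subvariety of $A$. By construction $D$ is invariant under translation by $B$, so it descends: writing $\pi:A\to A':=A/B$ for the quotient map, there is an effective divisor $D'$ on $A'$ with $\pi^*D'=D$.

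The key geometric claim is that $D'$ is ample. To see this I would show its stabilizer is finite: if $t_{a'}^*D'=D'$ for some $a'\in A'$, choosing a lift $a\in A$ with $\pi(a)=a'$ gives $t_a^*D=t_a^*\pi^*D'=\pi^*t_{a'}^*D'=\pi^*D'=D$, so $a\in G$ and hence $a'\in\pi(G)=G/B$, a finite group. Thus $D'$ is an effective divisor with finite translation stabilizer, and on an abelian variety such a divisor is ample (the associated line bundle is nondegenerate of index zero, since it possesses a nonzero section).

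Next I would transfer integrality along $\pi$. By the functoriality of Weil functions, $\lambda_{D,v}=\lambda_{D',v}\circ\pi+O(1)$, so the image $\pi(R)$ of any set $R$ of $k$-rational $(D,S)$-integral points on $A$ is a set of $k$-rational $(D',S)$-integral points on $A'$. Since $D'$ is ample, Theorem~\ref{Fal2} forces $\pi(R)$ to be finite. Consequently $R\subset\pi^{-1}(\pi(R))$, a finite union of translates of $B$.

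Finally I must check that $B\neq A$, so that this union is a proper closed subset and $R$ is not Zariski dense. If $B=A$, then $D$ would be invariant under every translation, forcing $\Supp D$ to be a translation-invariant proper closed subset of $A$; by transitivity of the translation action the only such subset is empty, contradicting the nontriviality of $D$. The main obstacle is really the geometric descent and the ampleness of $D'$; once these classical facts about effective divisors on abelian varieties are in place, the arithmetic input is exactly Faltings' Theorem~\ref{Fal2}, and the conclusion is immediate.
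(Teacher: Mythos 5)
The paper never actually proves Theorem \ref{Voj2}: it is recalled as a known result, used as a black box, with the general statement (for semiabelian varieties) attributed to Vojta \cite{V3}. Your proposal supplies a genuine derivation from Faltings' ample case (Theorem \ref{Fal2}), and it is correct; indeed it is the standard reduction. Quotienting by $B=G^0$, the identity component of the divisor-theoretic stabilizer $G=\{a\in A: t_a^*D=D\}$ (a closed $k$-subgroup, since $D$ and the translation action are defined over $k$), descending $D$ to $D'$ on $A'=A/B$ with $\pi^*D'=D$, identifying the stabilizer of $D'$ with the finite group $G/B$, transferring integrality via functoriality of Weil functions ($\lambda_{D,v}=\lambda_{D',v}\circ\pi+O(1)$), and applying Theorem \ref{Fal2} to get $\pi(R)$ finite puts $R$ inside finitely many translates of $B\subsetneq A$, which is exactly non-density. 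What your route buys is a self-contained proof of the abelian case from the one deep input the paper already quotes; what the paper's citation buys is the stronger semiabelian statement, which your argument does not reach (for a semiabelian $A$ the quotient $A/B$ need not be abelian, so Theorem \ref{Fal2} no longer applies). Two points in your write-up deserve sharpening. First, the parenthetical justification of ampleness (``nondegenerate of index zero since it possesses a section'') conflates the divisor stabilizer $\{a: t_a^*D'=D'\}$ with the larger group $K(L')=\{a: t_a^*L'\cong L'\}$ whose finiteness is what nondegeneracy means; passing from finiteness of the former to ampleness is precisely the classical theorem of Mumford (\emph{Abelian Varieties}, Section 6, Application 1: an effective divisor on an abelian variety is ample if and only if its translation stabilizer is finite), so you should cite that directly rather than argue through the index. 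Second, the descent step tacitly uses that each prime component of $D$ is individually $B$-invariant (true because $B$ is connected, hence cannot permute components nontrivially) and that pullback along the smooth surjection $\pi$ preserves multiplicities; both are routine but worth a sentence each.
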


More generally, Vojta \cite{V3} proved a generalization of Theorem \ref{Voj2} to semiabelian varieties.

Finally, we recall the fundamental result on the unit equation, proved independently by Evertse \cite{Ev} and van der Poorten and Schlickewei \cite{vdP2}.
\begin{theorem}[Evertse, van der Poorten and Schlickewei]
\label{sunit}
All but finitely many solutions of the equation
\begin{equation*}
u_1+u_2+\ldots+u_n=1, \quad u_1,\ldots,u_n\in \O_{k,S}^*,
\end{equation*}
satisfy an equation of the form $\sum_{i\in I}u_i=0$, where $I$ is a nonempty subset of $\{1,\ldots,n\}$.
\end{theorem}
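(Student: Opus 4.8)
The plan is to deduce Theorem~\ref{sunit} from the Schmidt Subspace Theorem, in its $S$-adic (several-places) form, which is the standard engine behind every known proof. I would first replace the statement by the more flexible version with fixed nonzero coefficients $a_1,\dots,a_n\in k^*$: all but finitely many solutions of $a_1u_1+\dots+a_nu_n=1$ with $u_i\in\O_{k,S}^*$ satisfy $\sum_{i\in I}a_iu_i=0$ for some nonempty $I\subsetneq\{1,\dots,n\}$. This is what an induction on $n$ actually requires, and it contains the stated theorem as the case $a_1=\dots=a_n=1$; the case $n=1$ is immediate. Since $S$-units of bounded height form a finite set by Northcott's theorem, it suffices to rule out an infinite sequence of solutions $\mathbf{u}=(u_1,\dots,u_n)$ that are \emph{non-degenerate} (no proper subsum vanishes) and have $H(\mathbf{u})\to\infty$.

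The core is an application of the Subspace Theorem to the points $\mathbf{u}\in k^n$. For each $v\in S$ I choose an index $i_v$ with $|u_{i_v}|_v=\max_i|u_i|_v$ and take the $n$ linearly independent forms $L_{i,v}=X_i$ for $i\ne i_v$ together with $L_{i_v,v}=a_1X_1+\dots+a_nX_n$ (independent since $a_{i_v}\ne 0$). On a solution the last form has value $1$, so $\prod_i|L_{i,v}(\mathbf{u})|_v=\prod_{i\ne i_v}|u_i|_v=\|\mathbf{u}\|_v^{-1}\prod_i|u_i|_v$. The product formula supplies the crucial cancellation: each $u_i$ is an $S$-unit, so $\prod_{v\in S}|u_i|_v=1$ and $\|\mathbf{u}\|_v=1$ for $v\notin S$, whence $H(\mathbf{u})=\prod_{v\in S}\|\mathbf{u}\|_v$. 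Taking the product over $v\in S$ gives
\[
\prod_{v\in S}\prod_{i=1}^n\frac{|L_{i,v}(\mathbf{u})|_v}{\|\mathbf{u}\|_v}=H(\mathbf{u})^{-(n+1)},
\]
a fixed power of the height lying strictly beyond the threshold of the Subspace Theorem, which therefore applies once $H(\mathbf{u})$ is large.

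The only subtlety here is that the forms depend on $\mathbf{u}$ through the indices $i_v$, whereas the Subspace Theorem requires a fixed system. I would handle this by partitioning the solutions according to their type $(i_v)_{v\in S}$: there are only finitely many types, so some type is realized by infinitely many solutions, and for that type the forms are fixed. The Subspace Theorem then confines these solutions to finitely many proper linear subspaces of $k^n$, so infinitely many of them satisfy a single nontrivial relation $\sum_{i=1}^n c_iu_i=0$ with $(c_i)\ne 0$; setting $J=\{i:c_i\ne 0\}$ we have $|J|\ge 2$, since a single-term relation would force a unit to vanish.

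It remains to descend, and this is where I expect the genuine difficulty to lie. Using the relation $\sum_{i\in J}c_iu_i=0$ to eliminate one variable, and dividing through by a suitable unit, turns $\sum a_iu_i=1$ into a generalized unit equation in at most $n-1$ of the $S$-units $u_i$ (or their ratios), to which the inductive hypothesis applies. The delicate point is purely combinatorial: one must organize this elimination so that the vanishing subsum furnished by induction for the reduced equation, combined with the relation defining the subspace, genuinely produces a proper vanishing subsum of the \emph{original} equation $\sum a_iu_i=1$, contradicting non-degeneracy. Tracking which indices survive each reduction, and checking that the inherited subsum is both nonempty and proper in the original variables, is the part that requires real care; everything else is either the Subspace Theorem or routine estimates with Weil functions and the product formula.
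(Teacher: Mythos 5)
The first thing to say is that the paper contains no proof of Theorem \ref{sunit} to compare against: it is quoted as a known result of Evertse and of van der Poorten--Schlickewei and used as a black box (through Lemma \ref{hyplem} and in the proof of Theorem \ref{mtheorem3}). So your proposal is measured against the literature proof, and what you have written is indeed that standard argument. The core application of the Subspace Theorem is set up correctly and completely: the reduction via Northcott to ruling out infinite families of non-degenerate solutions, the forms $L_{i,v}=X_i$ for $i\ne i_v$ and $L_{i_v,v}=\sum_j a_jX_j$, the computation $\prod_{v\in S}\prod_i |L_{i,v}(\mathbf{u})|_v=H(\mathbf{u})^{-1}$ from the product formula and the $S$-unit property, hence $\prod_{v\in S}\prod_i |L_{i,v}(\mathbf{u})|_v/\|\mathbf{u}\|_v=H(\mathbf{u})^{-(n+1)}$, which clears the $H^{-n-\epsilon}$ threshold; and the partition of solutions by the type $(i_v)_{v\in S}$ so that the forms become fixed. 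Strengthening the statement to arbitrary nonzero coefficients $a_i$ for the sake of the induction is also the right move (alternatively one can enlarge $S$ so that the $a_i$ become $S$-units and absorb them).

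The gap is the descent step, which you have flagged but not carried out, and it is genuinely where the remaining content lies, so let me name it. A vanishing subsum of the subspace relation $\sum_{i\in J}c_iu_i=0$ is \emph{not} a vanishing subsum of the original equation, since the $c_i$ are unrelated to the $a_i$; closing the loop requires a secondary induction on the support $J$. The route that works is the ratio route: divide the relation by $u_{j_0}$, $j_0\in J$, to get the unit equation $\sum_{i\in J\setminus\{j_0\}}(-c_i/c_{j_0})(u_i/u_{j_0})=1$ in at most $n-1$ variables. Induction gives either a vanishing subsum $\sum_{i\in I}c_iu_i=0$ with $\emptyset\ne I\subsetneq J\setminus\{j_0\}$ (a new relation with strictly smaller support: recurse), or finitely many ratio tuples $\rho_i=u_i/u_{j_0}$. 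In the latter case substitute back: $\sum_{i\notin J}a_iu_i+\gamma u_{j_0}=1$ with $\gamma=\sum_{i\in J}a_i\rho_i$; if $\gamma\ne0$ this is a shorter unit equation whose degeneracies unpack to proper nonempty subsums $\sum_{i\in I\cup J}a_iu_i=0$ of the original, and if $\gamma=0$ then $\sum_{i\in J}a_iu_i=0$ is itself such a subsum (or there are no solutions when $J=\{1,\ldots,n\}$). By contrast, the other route you mention---eliminating $u_{j_0}$ by direct substitution into $\sum a_iu_i=1$---has a real pitfall: the induced coefficients $a_i-a_{j_0}c_i/c_{j_0}$ can vanish, and then the reduced equation omits some variables, so its finiteness does not bound the original solutions. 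Concretely, $u_1+u_2+u_3=1$ with relation $u_2+u_3=0$ reduces to $u_1=1$, yet the subspace contains the infinite family $(1,t,-t)$; these solutions are all degenerate, but the argument must extract that degeneracy from the coefficient cancellation itself, not from finiteness of the reduced equation. So your plan is the correct one, but the deferred bookkeeping (secondary induction on $|J|$, the $\gamma=0$ case, and the pull-back of subsums) is exactly the proof that remains to be written.
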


Geometrically, this may be viewed as a result about integral points on the complement of a hyperplane in $\mathbb{G}_m^n$.  Thus, Theorem \ref{sunit} is again a result about integral points on a semiabelian variety.  Using Theorem \ref{sunit}, it is easy to classify when a complement of hyperplanes in projective space may contain a Zariski dense set of rational integral points.  We have the following result from \cite[Theorem 6A]{Lev3}.

\begin{lemma}
\label{hyplem}
Let $\mathcal{H}$ be a set of hyperplanes in $\mathbb{P}^n$ defined over a number field $k$ and let $\mathcal{L}$ be a corresponding set of linear forms.  There does not exist a Zariski dense set of $K$-rational $(\sum_{H\in\mathcal{H}}H,S)$-integral points on $\mathbb{P}^n$, for all number fields $K\supset k$ and $S\subset M_K$, if and only if $\mathcal{L}$ is a linearly dependent set.  Furthermore, in this case any set $R$ of $K$-rational $(\sum_{H\in\mathcal{H}}H,S)$-integral points on $\mathbb{P}^n$ is contained in a finite union of hyperplanes of $\mathbb{P}^n$ defined over $K$. 
\end{lemma}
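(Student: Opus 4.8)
The plan is to prove the two directions of the equivalence separately, with Theorem \ref{sunit} doing the real work in the direction that simultaneously yields the stronger ``finite union of hyperplanes'' conclusion. Throughout I may assume the hyperplanes in $\mathcal{H}$ are distinct, hence the forms in $\mathcal{L}$ pairwise non-proportional: a repeated hyperplane changes neither $\Supp(\sum_{H}H)$ nor, up to rescaling the integrality constants $c_v$, the notion of $(\sum_H H,S)$-integrality.

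First I would dispose of the direction ``$\mathcal{L}$ independent $\Rightarrow$ a Zariski dense integral set exists over some $K,S$'' (the contrapositive of one implication). Write $\mathcal{L}=\{L_1,\dots,L_m\}$; independence forces $m\le n+1$, so after a linear change of coordinates on $\mathbb{P}^n$, which alters neither integrality nor density, I may take $L_i=x_{i-1}$ for $i=1,\dots,m$. Choosing $K\supseteq k$ and $S$ so that $\O_{K,S}^*$ is infinite, I consider the points $(u_0:\cdots:u_{m-1}:a_m:\cdots:a_n)$ with $u_i\in\O_{K,S}^*$ and $a_j\in\O_{K,S}$. For $v\notin S$ every coordinate is $v$-integral while the first $m$ are $v$-units, so with the standard Weil function $\lambda_{H_i,v}(P)=\log\bigl(\max_j|p_j|_v/|L_i(P)|_v\bigr)+O(1)$ these points are $(\sum_iH_i,S)$-integral; since $\O_{K,S}$ and $\O_{K,S}^*$ are infinite they are Zariski dense in $\mathbb{P}^n$.

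For the converse, suppose $\mathcal{L}$ is linearly dependent and let $R$ be any set of $K$-rational $(\sum_iH_i,S)$-integral points for some $K\supseteq k$. The key preliminary step is to translate integrality into units: after enlarging $S$ by the finitely many places dividing the coefficients of the $L_i$, for each $P\in R$ with $S$-integral homogeneous coordinates $(p_0:\cdots:p_n)$ and each $v\notin S$, the triangle inequality gives $|L_i(P)|_v\le\text{const}\cdot\max_j|p_j|_v$, while the integrality bound $\lambda_{H_i,v}(P)\le c_v$ gives the reverse inequality; hence each ratio $L_i(P)/L_j(P)$ lies in $\O_{K,S}^*$. Now pick a minimal dependent subset, say $L_1,\dots,L_r$ after reindexing, with a relation over $k$ having all coefficients nonzero. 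Dividing by $-c_rL_r(P)$ produces a unit equation
\[
\sum_{i=1}^{r-1}u_i=1,\qquad u_i=-\frac{c_i}{c_r}\cdot\frac{L_i(P)}{L_r(P)}\in\O_{K,S}^*,
\]
after absorbing the constants $c_i/c_r$ into $S$. By Theorem \ref{sunit}, all but finitely many $P\in R$ give a degenerate solution $\sum_{i\in I}u_i=0$ for some nonempty proper $I$, which unwinds to $\sum_{i\in I}c_iL_i(P)=0$; by minimality of the dependent set the form $\sum_{i\in I}c_iL_i$ is nonzero, so $P$ lies on a fixed hyperplane over $K$, and only finitely many $I$ occur. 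The remaining finitely many $P$ correspond to finitely many non-degenerate solutions, each fixing the ratios $L_i(P)/L_r(P)=\alpha_i$; since $L_i$ and $L_r$ are non-proportional, $L_i-\alpha_iL_r$ is a nonzero form and again confines $P$ to a $K$-hyperplane. Thus $R$ lies in a finite union of $K$-hyperplanes, giving both non-density (for every $K,S$) and the ``furthermore'' clause.

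The main obstacle is not the unit-equation step, which is essentially mechanical once set up, but the uniform control in the preliminary reduction: one must choose good homogeneous $S$-integral coordinates and compare $\lambda_{H_i,v}$ with $\log\bigl(\max_j|p_j|_v/|L_i(P)|_v\bigr)$ uniformly in both $P\in R$ and $v$, using that the integrality constants $c_v$ vanish for all but finitely many $v$ so that $S$ can be enlarged once, independently of $P$. After that, the only care required is the bookkeeping that minimal dependence yields genuinely nonzero forms, so that every solution class of the unit equation is confined to an honest hyperplane defined over $K$.
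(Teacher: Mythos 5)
Your proof is correct and takes exactly the route the paper intends: the paper does not prove Lemma~\ref{hyplem} itself but quotes it from an earlier work, remarking that it follows easily from the unit equation theorem (Theorem~\ref{sunit}), and your argument—converting integrality into the statement that the ratios $L_i(P)/L_j(P)$ are $S$-units, applying Theorem~\ref{sunit} to a minimal dependence relation among the forms, and confining the finitely many non-degenerate solutions to hyperplanes via pairwise non-proportionality—is precisely that standard deduction. Both implications and the ``furthermore'' clause are handled soundly, including the correct use of minimality to ensure the subsum forms $\sum_{i\in I}c_iL_i$ are nonzero.
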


\section{Proof of the main theorem}
\label{smain}



We first prove the equivalence of Theorems \ref{mtheorem2} and \ref{mtheorem}.  This is an immediate consequence of the following result.
\begin{theorem}
\label{lequiv}
Let $C$ be a nonsingular affine curve over an algebraically closed field $k$.  Let $\tilde{C}$ be a nonsingular projective closure of $C$ and let $(\tilde{C}\setminus C)(k)=\{P_1,\ldots, P_q\}$.  There exists an affine curve $C'\subset C$ and a finite morphism $\phi:C'\to \mathbb{G}_m$ of degree $d$ if and only if there exists a morphism $\phi:\tilde{C}\to\mathbb{P}^1$ of degree $d$ with $\phi(\{P_1,\ldots, P_q\})\subset \{0,\infty\}$.
\end{theorem}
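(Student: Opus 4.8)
The plan is to prove both implications by passing between finite morphisms on the affine curve $C'$ and their extensions to the smooth projective models. The key point is that a finite morphism between smooth affine curves extends uniquely to a morphism between their smooth projective completions, and one then tracks where the points at infinity are sent.

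For the forward direction, suppose we are given an affine curve $C' \subset C$ and a finite morphism $\phi : C' \to \mathbb{G}_m$ of degree $d$. First I would take $\tilde{C'}$ to be the smooth projective completion of $C'$; since $C'$ is a subcurve of $C$ with the same function field behavior at the relevant points, $\tilde{C'}$ is naturally identified with $\tilde{C}$ (they are the unique smooth projective model of the common function field, as $C' \subset C$ are both dense open subsets of the same curve). The morphism $\phi$ extends uniquely to $\tilde\phi : \tilde{C} \to \mathbb{P}^1$, since $\mathbb{G}_m \subset \mathbb{P}^1$ and any rational map from a smooth projective curve to a projective variety is a morphism. The extension has the same degree $d$. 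The remaining task is to check $\tilde\phi(\{P_1,\ldots,P_q\}) \subset \{0,\infty\}$: each $P_i$ lies in $\tilde{C} \setminus C \subset \tilde{C} \setminus C'$, so $P_i$ is a point at infinity for $C'$, meaning $\phi$ cannot map a neighborhood of $P_i$ into $\mathbb{G}_m$ as a finite map unless $P_i$ maps to the boundary $\mathbb{P}^1 \setminus \mathbb{G}_m = \{0,\infty\}$. Concretely, $\phi^{-1}(\mathbb{G}_m) = C'$ as a preimage (or at least $C' \subset \phi^{-1}(\mathbb{G}_m)$), so any point mapping into $\mathbb{G}_m$ lies over $C'$ and hence is not one of the $P_i$; therefore $\tilde\phi(P_i) \in \{0,\infty\}$.

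For the reverse direction, suppose $\phi : \tilde{C} \to \mathbb{P}^1$ is a morphism of degree $d$ with $\phi(\{P_1,\ldots,P_q\}) \subset \{0,\infty\}$. I would set $C' = \phi^{-1}(\mathbb{G}_m) \cap C$ and restrict $\phi$ to obtain $\phi|_{C'} : C' \to \mathbb{G}_m$. This restriction is finite of degree $d$ because $\phi$ is a finite morphism of smooth projective curves of degree $d$ and the restriction to a preimage of an open set is again finite of the same degree. The one thing to verify is $C' \subset C$, which is immediate, and that $C'$ is genuinely an affine curve, i.e. nonempty and open; it is open as the preimage of an open set, and it is nonempty since $\phi$ is surjective (being a nonconstant morphism to $\mathbb{P}^1$) so $\phi^{-1}(\mathbb{G}_m) \neq \emptyset$, and we must check this preimage meets $C$. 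Since $\phi^{-1}(\{0,\infty\})$ is a finite set and contains all the $P_i$ by hypothesis, while $\tilde{C} \setminus C = \{P_1,\ldots,P_q\}$ is finite, the complement $\phi^{-1}(\mathbb{G}_m)$ is a cofinite set meeting $C$ in a cofinite subset of $C$, hence nonempty and affine.

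The main obstacle, though largely routine, is the careful bookkeeping showing that the preimages of the boundary $\{0,\infty\}$ account correctly for the points at infinity, and in particular verifying in the reverse direction that $C' = \phi^{-1}(\mathbb{G}_m) \cap C$ has no points at infinity mapping into $\mathbb{G}_m$ — equivalently, that the points of $\tilde{C} \setminus C'$ are precisely the points over $\{0,\infty\}$ together with the original $P_i$. Because the hypothesis forces every $P_i$ into $\phi^{-1}(\{0,\infty\})$, this set-theoretic containment works out, and the finiteness of the degree is preserved under restriction to the open set. The subtlety worth stating explicitly is the identification of the smooth projective completion of $C'$ with $\tilde{C}$, which rests on the fact that smooth projective curves with a given function field are unique up to isomorphism.
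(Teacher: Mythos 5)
Your reverse direction (starting from $\phi:\tilde{C}\to\mathbb{P}^1$ and restricting to $C'=\phi^{-1}(\mathbb{G}_m)$) is correct and is essentially the paper's argument. The forward direction, however, has a genuine gap, and it sits exactly where the content of the theorem lies. After extending $\phi:C'\to\mathbb{G}_m$ to $\tilde{\phi}:\tilde{C}\to\mathbb{P}^1$, everything hinges on the claim you state as ``Concretely, $\phi^{-1}(\mathbb{G}_m)=C'$'': you assert this equality but never prove it, and the hedged fallback ``(or at least $C'\subset\tilde{\phi}^{-1}(\mathbb{G}_m))$'' cannot do the job, because the inference you draw from it --- ``any point mapping into $\mathbb{G}_m$ lies in $C'$, hence is not one of the $P_i$'' --- is precisely the reverse inclusion $\tilde{\phi}^{-1}(\mathbb{G}_m)\subset C'$. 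Nothing in your write-up actually uses the finiteness of $\phi:C'\to\mathbb{G}_m$ in a logically substantive way; the sentence ``$\phi$ cannot map a neighborhood of $P_i$ into $\mathbb{G}_m$ as a finite map unless $P_i$ maps to the boundary'' restates the desired conclusion rather than deriving it. Finiteness is indispensable here: take $C=\mathbb{G}_m$, $\tilde{C}=\mathbb{P}^1$, $\{P_1,P_2\}=\{0,\infty\}$, $C'=\mathbb{G}_m\setminus\{1\}$, and $\phi(x)=x-1$, a degree-one morphism $C'\to\mathbb{G}_m$ which is quasi-finite but not finite (its image $\mathbb{G}_m\setminus\{-1\}$ is not closed). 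Its extension $\tilde{\phi}(x)=x-1$ sends $P_1=0$ to $-1\notin\{0,\infty\}$, so the conclusion fails. Any correct proof must therefore convert the finiteness hypothesis into the equality $\tilde{\phi}^{-1}(\mathbb{G}_m)=C'$, and your proposal is missing that step entirely; calling this part ``largely routine bookkeeping'' misplaces where the difficulty is.

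For comparison, the paper closes this gap as follows. Set $C''=\tilde{\phi}^{-1}(\mathbb{G}_m)$, so $\tilde{\phi}|_{C''}:C''\to\mathbb{G}_m$ is finite, being the restriction of the finite morphism $\tilde{\phi}$ over an open subset of $\mathbb{P}^1$. Clearly $C'\subset C''$, and since both $C'\to\mathbb{G}_m$ and $C''\to\mathbb{G}_m$ are finite, the open immersion $C'\hookrightarrow C''$ is itself a finite morphism (a morphism between schemes finite over a common separated base). Finite morphisms are closed, so $C'$ is closed, as well as open and dense, in the irreducible curve $C''$, forcing $C'=C''$; hence $\tilde{C}\setminus C'=\tilde{\phi}^{-1}(\{0,\infty\})$ contains every $P_i$, which gives $\tilde{\phi}(P_i)\in\{0,\infty\}$. (Alternative correct routes to the same equality: apply the valuative criterion of properness to $\phi$ at the local ring $\mathcal{O}_{\tilde{C},P_i}$, or use that a finite flat morphism of smooth curves has all fibers of full degree $d$, so a point of $\tilde{C}\setminus C'$ lying over a point of $\mathbb{G}_m$ would make some fiber of $\phi$ too small.) Some argument of this kind needs to be added to your forward direction.
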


\begin{proof}
Suppose there exists a morphism $\phi:\tilde{C}\to\mathbb{P}^1$ of degree $d$ with $\phi(\{P_1,\ldots, P_q\})\subset \{0,\infty\}$.  Let $C'=\phi^{-1}(\mathbb{G}_m)$.  As is well known, since $\phi$ is a nonconstant morphism between projective curves, $\phi$ is a finite morphism.  Then for any open subset $U\subset\mathbb{P}^1$, $\phi|_{\phi^{-1}(U)}:\phi^{-1}(U)\to U$ is a finite morphism.  In particular, $\phi|_{C'}:C'\to\mathbb{G}_m$ is a finite morphism of degree $d$ and by assumption $C'\subset C$.

Conversely, suppose that there exists an affine curve $C'\subset C$ and a finite morphism $\phi:C'\to \mathbb{G}_m$ of degree $d$.  We may extend $\phi$ to a morphism $\phi:\tilde{C}\to \mathbb{P}^1$.  Let $C''=\phi^{-1}(\mathbb{G}_m)$.  As noted earlier, $\phi:C''\to\mathbb{G}_m$ is a finite morphism.  Clearly $C'\subset C''$.  Since $C''\to \mathbb{G}_m$ and $C'\to\mathbb{G}_m$ are both finite morphisms, it follows that $C'\hookrightarrow C''$ is a finite morphism.  Since finite morphisms are closed, this implies that $C'=C''$.  It follows that $\phi(\{P_1,\ldots, P_q\})\subset \{0,\infty\}$.
\end{proof}

The easy direction of Theorems \ref{mtheorem2} and \ref{mtheorem} follows from the next result.  We construct infinitely many integral points of degree $d$ (or less) on $C$ by pulling back integral points on $\mathbb{G}_m$ (i.e., $S$-units) via the morphism $\phi$.

\begin{theorem}
Let $C\subset \mathbb{A}^n$ be a nonsingular affine curve and suppose that there exists a finite morphism $\phi:C\to \mathbb{G}_m$, with both $C$ and $\phi$ defined over a number field $k$.  Let $d=\deg\phi$.  Then for some finite set of places $S$ of $k$, the set of $S$-integral points of degree $d$ or less over $k$,
\begin{align*}
\{P\in C(\overline{\O}_{k,S})\mid [k(P):k]\leq d\},
\end{align*}
is infinite.
\end{theorem}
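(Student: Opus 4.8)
The plan is to realize the desired integral points as preimages under $\phi$ of $S$-units on $\mathbb{G}_m$. Recall that an $S$-integral point on $\mathbb{G}_m=\mathbb{A}^1\setminus\{0\}$ is precisely an $S$-unit, and after enlarging $S$ so that $\#S\geq 2$ the group $\O_{k,S}^*$ is infinite by Dirichlet's $S$-unit theorem. For each $u\in\O_{k,S}^*$ the fiber $\phi^{-1}(u)$ is a nonempty finite $k$-subscheme of $C$ with at most $d$ geometric points (nonempty because a finite nonconstant morphism of curves is surjective, and at most $d$ points since $\deg\phi=d$). As we are in characteristic $0$, each closed point $P\in\phi^{-1}(u)$ therefore satisfies $[k(P):k]\leq d$. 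Distinct $S$-units have disjoint fibers, so as $u$ ranges over $\O_{k,S}^*$ the union $\bigcup_u\phi^{-1}(u)$ is infinite.

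The one point requiring care is integrality: I must choose $S$ once and for all, independently of $u$, so that every $P\in\phi^{-1}(u)$ with $u$ an $S$-unit is an $S$-integral point of $C$. Here I would use that $\phi$ is finite, so the affine coordinate ring $A(C)$ is a finitely generated, hence integral, module over $\phi^*k[t,t^{-1}]=k[\phi,\phi^{-1}]$, the image of the coordinate ring of $\mathbb{G}_m$. Writing $x_1,\ldots,x_n$ for the coordinate functions of the embedding $C\subset\mathbb{A}^n$, each $x_i$ satisfies a monic relation
\begin{equation*}
x_i^{m_i}+p_{i,m_i-1}(\phi)\,x_i^{m_i-1}+\cdots+p_{i,0}(\phi)=0,
\end{equation*}
where each $p_{i,j}(t)\in k[t,t^{-1}]$ is a Laurent polynomial. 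Enlarging $S$ to include the finitely many places dividing the finitely many coefficients of the finitely many Laurent polynomials $p_{i,j}$ makes all of these coefficients $S$-integers.

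With this choice of $S$, fix an $S$-unit $u$ and a point $P\in\phi^{-1}(u)$, and set $M=k(P)$. Evaluating the relations above at $P$ and using $\phi(P)=u\in\O_{M,S_M}^*\subset\O_{M,S_M}$ shows that each $x_i(P)$ is integral over $\O_{M,S_M}$; since $\O_{M,S_M}$ is integrally closed in $M$, we conclude $x_i(P)\in\O_{M,S_M}$, so $P\in C(\O_{M,S_M})\subset C(\overline{\O}_{k,S})$. Combined with the degree bound $[k(P):k]\leq d$ and the infinitude of $\bigcup_u\phi^{-1}(u)$ established above, this exhibits infinitely many points in $\{P\in C(\overline{\O}_{k,S})\mid [k(P):k]\leq d\}$, as required.

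The main (and essentially only) obstacle is the uniform integrality step: one must observe that, because the integral dependence of the coordinates on $\phi$ is governed by finitely many Laurent polynomials with finitely many coefficients, a single finite enlargement of $S$ works simultaneously for all $S$-units $u$. Once $S$ is so chosen, the remaining assertions — surjectivity of $\phi$, the bound on the number of points in each fiber yielding $[k(P):k]\leq d$, and Dirichlet's theorem producing infinitely many $S$-units — are routine.
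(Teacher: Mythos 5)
Your proof is correct and takes essentially the same approach as the paper's own argument: the paper likewise uses finiteness of $\phi$ to get monic equations for the coordinate functions over $k[\phi,\phi^{-1}]$, enlarges $S$ so that these equations have coefficients in $\O_{k,S}[\phi,\phi^{-1}]$ and $\O_{k,S}^*$ is infinite, and concludes that $\phi^{-1}(\O_{k,S}^*)$ is an infinite set of $S$-integral points of degree at most $d$. Your write-up simply makes explicit the routine details (surjectivity of $\phi$, the fiber degree bound giving $[k(P):k]\leq d$, and integral closedness of $\O_{M,S_M}$) that the paper leaves implicit.
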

\begin{proof}
Let $x_1,\ldots, x_n$ be the $n$ coordinate functions on $\mathbb{A}^n$ restricted to $C$.  Since $\phi$ is a finite morphism, each $x_i$ satisfies a monic polynomial equation over $k[\phi,\frac{1}{\phi}]$.  Taking $S$ large enough so that $\O_{k,S}^*$ is infinite and all of the above monic polynomials have coefficients in $\O_{k,S}[\phi,\frac{1}{\phi}]$, we see that 
\begin{align*}
\phi^{-1}(\O_{k,S}^*)\subset \{P\in C(\overline{\O}_{k,S})\mid [k(P):k]\leq d\},
\end{align*}
and both sets are infinite.

Alternatively, consider a nonsingular projective completion $\tilde{C}$ of $C$ and extend $\phi$ to a morphism $\phi:\tilde{C}\to\mathbb{P}^1$.  Let $(\tilde{C}\setminus C)(\kbar)=\{P_1,\ldots, P_q\}$.  Let $D=\sum_{i=1}^qP_i$.  From the proof of Theorem \ref{lequiv}, $\phi(\{P_1,\ldots, P_q\})\subset \{0,\infty\}$.  Let $S'$ be a finite set of places of $k$ containing the archimedean places.  If $R$ is a set of $(0+\infty,S')$-integral points on $\mathbb{P}^1$, then from functoriality of Weil functions, it is immediate that $\phi^{-1}(R)$ is a set of $(\phi^*(0+\infty),S')$-integral points on $\tilde{C}$.  Since $D\leq \phi^*(0+\infty)$, $\phi^{-1}(R)$ is also a set of $(D,S')$-integral points on $\tilde{C}$.  Then for some finite set of places $S\supset S'$, depending on the embedding $C\subset \mathbb{A}^n$, we have $\phi^{-1}(R)\subset \{P\in C(\overline{\O}_{k,S})\mid [k(P):k]\leq d\}$.  Since $\O_{k,S'}^*$ is a set of $(0+\infty,S')$-integral points on $\mathbb{P}^1$, the result follows (for an appropriate $S$) on taking $R=\O_{k,S'}^*$ with $|S'|>1$ (so that $R$ is infinite).
\end{proof}

The difficult implication in Theorem \ref{mtheorem} follows from the next theorem.  In fact, when the affine curve is rational or has enough points at infinity, we obtain an even stronger statement.

\renewcommand{\theenumi}{\alph{enumi}}

\begin{theorem}
\label{mtheorem3}
Let $C$ be a nonsingular projective curve defined over a number field $k$.  Let $S$ be a finite set of places of $k$ containing the archimedean places.  Let $D=\sum_{i=1}^q P_i$ be a nontrivial sum of distinct rational points $P_i\in C(k)$.  Let $d$ be a positive integer.  Let
\begin{align*}
\Phi(D,d,k)&=\{\phi\in k(C)\mid \deg \phi\leq d, D\leq \phi^*(0)+\phi^*(\infty)\},\\
Z(D,d,k)&=\bigcup_{\phi\in \Phi(D,d,k)} \phi^{-1}(k).
\end{align*}
Suppose that there exists an infinite set of $(D,S)$-integral points $R\subset\{P\in C(\kbar)\mid [k(P):k]\leq d\}$.  Then the following statements hold.
\begin{enumerate}
\item There exists a morphism $\phi:C\to\mathbb{P}^1$, defined over $\kbar$, of degree $\leq d$ such that $\phi(\{P_1,\ldots, P_q\})\subset \{0,\infty\}$.
\item If $q>d$ or $C=\mathbb{P}^1$, then $R\setminus Z(D,d,k)$ is finite.\label{mii}
\end{enumerate}
\end{theorem}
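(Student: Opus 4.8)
The central idea is to lift the problem to the $d$th symmetric power $\Sym^d(C)$, where a set of integral points of degree $\leq d$ on $C$ becomes an honest set of $k$-rational integral points. The plan is to proceed as follows. An infinite set $R$ of $(D,S)$-integral points of degree $\leq d$ produces, by taking Galois orbits and padding out to exactly $d$ points, an infinite set $R^{(d)}$ of $k$-rational points on $\Sym^d(C)$. One checks, using functoriality of Weil functions, that $R^{(d)}$ is a set of $(D^{(d)},S)$-integral points for a suitable divisor $D^{(d)}$ on $\Sym^d(C)$ built from $D$; concretely, the divisors $\Gamma_i = \{ \text{effective }0\text{-cycles containing } P_i\}$, $i=1,\dots,q$, pull back the points at infinity, and $R^{(d)}$ avoids each $\Gamma_i$ integrally. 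Because $R$ is infinite, $R^{(d)}$ is infinite, hence Zariski dense in some positive-dimensional subvariety $W\subset \Sym^d(C)$ defined over $k$.

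\textbf{Applying the semiabelian machinery.} The first substantive step is to run Lemma~\ref{salem} on $V=\Sym^d(C)$ with the divisors $D_i=\Gamma_i$ restricted to $W$. This yields the inequality
\begin{align*}
\rk\{\Gamma_i|_W\}_{i=1}^q + q(W) \leq \dim W + \rk\{c_1(\Gamma_i)\}_{i=1}^q.
\end{align*}
The Abel–Jacobi map $u:\Sym^d(C)\to \Jac(C)$ controls the right-hand side: the classes $c_1(\Gamma_i)$ differ from one another by the classes of the points $P_i$ in $\Jac(C)$ under $u$, so $\rk\{c_1(\Gamma_i)\}$ measures how many of the differences $P_i-P_j$ are independent in $\Jac(C)$, while $q(W)$ is governed by the image $u(W)$ as a subvariety of the abelian variety $\Jac(C)$. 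The geometric payoff I expect is that, after combining this inequality with the structure of $W$ under $u$, either many of the $\Gamma_i$ become linearly equivalent on $W$ — which forces the existence of a pencil, i.e.\ a map $\phi:C\to\mathbb{P}^1$ sending the relevant $P_i$ to a common fiber — or $u(W)$ is a translated abelian subvariety, in which case Faltings' Theorem~\ref{Falt1} forces $W$ to be fibered by translated subvarieties along which the $P_i$ again collapse. Either way one extracts the desired $\phi$ of degree $\leq d$ with $\phi(\{P_1,\dots,P_q\})\subset\{0,\infty\}$, giving part~(a).

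\textbf{The stronger statement (b).} For part~\eqref{mii}, under the hypothesis $q>d$ or $C=\mathbb{P}^1$, the plan is to show that outside $Z(D,d,k)$ the integral points are confined to a lower-dimensional locus where the unit equation (Theorem~\ref{sunit}) and its projective consequence Lemma~\ref{hyplem} apply directly. When $C=\mathbb{P}^1$, the divisor $D^{(d)}$ on $\Sym^d(\mathbb{P}^1)=\mathbb{P}^d$ cuts out hyperplanes, and Lemma~\ref{hyplem} pins any integral point set inside a finite union of hyperplanes defined over $k$; one then argues that each such hyperplane either lies in $Z(D,d,k)$ or carries only finitely many integral points. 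When $q>d$, the condition $D\leq\phi^*(0)+\phi^*(\infty)$ becomes so restrictive that the complement of $Z(D,d,k)$ embeds into a semiabelian variety with ample boundary, and Theorem~\ref{Fal2} (or Theorem~\ref{Voj2}) forces finiteness. The set $Z(D,d,k)$ is precisely the locus of points lying in a fiber of some admissible $\phi$ over a $k$-rational value, which is exactly the unavoidable infinite family coming from pulling back $S$-units, so removing it leaves only finitely many points.

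\textbf{The main obstacle.} The hardest part will be the passage from the numerical inequality of Lemma~\ref{salem} back to the concrete geometric conclusion that a low-degree pencil $\phi$ exists with the prescribed behavior on the $P_i$. This requires carefully translating the ranks $\rk\{\Gamma_i|_W\}$ and $q(W)$ into statements about linear equivalences of the $P_i$ on $C$ and about the fibration structure of $W$ under the Abel–Jacobi map, and then verifying that the resulting pencil has degree at most $d$ rather than merely some bounded degree. Controlling the degree — and ensuring the map is defined over $\kbar$ sending exactly the points at infinity into $\{0,\infty\}$ — is where the symmetric-power geometry and the bound $\dim W \geq 1$ must be exploited most delicately.
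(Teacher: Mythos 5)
Your setup is exactly the paper's: lift $R$ to $k$-rational points of $\Sym^d(C)$, use the divisors $E_i=\psi_*\pi_1^*P_i$ (your $\Gamma_i$), pass to a positive-dimensional component $W$ of the Zariski closure, and bring in Lemma~\ref{salem} and the Abel--Jacobi map $f:\Sym^d(C)\to\Jac(C)$. But the proof stops where it should begin: the sentence ``the geometric payoff I expect is\dots'' replaces the entire core argument. The paper runs a precise dichotomy on $\dim f(W)$. If $f(W)$ is a point, then $W$ lies in a complete linear system $|D'|\cong\mathbb{P}^r$, each $E_i$ restricts to a hyperplane, and repeated application of Lemma~\ref{hyplem} confines $R''$ (up to a finite set) to linear subspaces $L$ on which the restricted hyperplanes are few and in general position; the morphism $\phi$ is then constructed explicitly as the pencil determined by the line through a $k$-rational point of the intersection of all but one of those hyperplanes and a point of $R''\cap L$ --- this explicit construction is what simultaneously gives $\deg\phi\leq d$, the condition $\phi(\{P_1,\dots,P_q\})\subset\{0,\infty\}$, and the containment of the relevant integral points in $Z(D,d,k)$ needed for part (b). If $\dim f(W)>0$, your proposed alternative (``Faltings forces $W$ to be fibered by translated subvarieties along which the $P_i$ collapse'') is not the argument and does not visibly produce $\phi$: the paper instead sets $Y=f(\cap_i\Supp F_i)$, obtains $\phi$ directly whenever some fiber $f^{-1}(P)$, $P\in Y$, is not contained in $\Supp E$ (a member of the linear system avoiding all $P_i$ gives $\phi$ with $\phi(\{P_1,\dots,P_q\})=\{0\}$), and otherwise reaches a contradiction: Theorem~\ref{Voj2} rules out codimension-one components of $Y$ in the translated abelian subvariety $X=f(W)$ (this is where Theorem~\ref{Falt1} enters), while Lemma~\ref{ark} together with the dimension count $\dim W-\dim X\geq r-1$ rules out codimension $\geq 2$. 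None of this machinery appears in your proposal, so part (a) is not established.

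Two specific claims are also wrong. First, your reading of $\rk\{c_1(\Gamma_i)\}$: the first Chern class sees only algebraic equivalence, and since any two points of $C$ are algebraically equivalent, the $E_i$ are all algebraically equivalent (the paper's Lemma~\ref{divlem}), so $\rk\{c_1(E_i)\}_{i=1}^q=1$; it does not ``measure how many of the differences $P_i-P_j$ are independent in $\Jac(C)$.'' This is not cosmetic: rank one is exactly what makes the right-hand side of Lemma~\ref{salem} equal to $\dim W+1$, which drives both cases. Second, your treatment of part (b) for $q>d$ rests on the assertion that the complement of $Z(D,d,k)$ ``embeds into a semiabelian variety with ample boundary,'' which is unsupported and implausible as stated ($Z(D,d,k)$ is a countable union of fibers of maps, not a closed subvariety one can remove). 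The actual mechanism is elementary and different: $q>d$ forces $\cap_{i=1}^q\Supp E_i=\emptyset$, since a degree-$d$ effective divisor cannot contain $q>d$ distinct points; then Lemma~\ref{ark} (ampleness of the $F_i$ plus empty intersection) gives $\rk\{F_i\}_{i=1}^q\geq\dim W+1$, which combined with the inequality from Lemma~\ref{salem} forces $q(W)=0$, so $f(W)$ is a point and the linear-system case applies, yielding finiteness of $R\setminus Z(D,d,k)$.
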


Before proving Theorem \ref{mtheorem3} we give some preliminary results.  Let $C$ be a nonsingular projective curve over a number field $k$.  We will first relate integral points of bounded degree on $C$ to rational integral points on symmetric powers of $C$.  Let $d$ be a positive integer and let $\Sym^d(C)$ denote the $d$th symmetric power of $C$.  Let $\psi:C^d\to \Sym^d(C)$ be the natural map.  Let $\pi_i:C^d\to C$ be the $i$th projection map for $i=1,\ldots, d$.  Define 
\begin{align*}
C(k,d)=\{P\in C(\kbar)\mid [k(P):k]=d\}.
\end{align*}
To each point in $C(k,d)$ we can naturally associate a $k$-rational point of $\Sym^d(C)$ as follows.  Let $Q\in C(k,d)$.  Let $Q=Q_1,\ldots, Q_d$ denote the $d$ conjugates of $Q$ over $k$ (in some order).  Let $\rho(Q)=(Q_1,\ldots, Q_d)\in C^d(\kbar)$.  Since every element of the absolute Galois group $\Gal(\kbar/k)$ permutes the coordinates of $\rho(Q)$, it's clear that $\psi(\rho(Q))$ is a $k$-rational point of $\Sym^d(C)$.  The next lemma relates this construction and integrality.

\begin{lemma}
\label{intlem}
Let $D=\sum_{i=1}^q P_i$ be a sum of distinct points $P_i\in C(k)$ and let $S$ be a finite set of places of $k$ containing the archimedean places..  Let $R\subset C(k,d)$ be a set of $(D,S)$-integral points.  Let $E_i=\psi_*(P_i\times C^{d-1})=\psi_*\pi_1^*P_i$, $i=1,\ldots, q$, and let $E=\sum_{i=1}^qE_i$.  Then $\psi(\rho(R))$ is a set of $k$-rational $(E,S)$-integral points on $\Sym^d(C)$.
\end{lemma}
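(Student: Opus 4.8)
The plan is to prove integrality of $\psi(\rho(R))$ on $\Sym^d(C)$ by comparing Weil functions for $E$ on $\Sym^d(C)$ with Weil functions for $D$ on $C$, via the functoriality of Weil functions through the maps $\psi\colon C^d\to\Sym^d(C)$ and $\pi_j\colon C^d\to C$. Concretely, for $Q\in R$ with conjugates $Q=Q_1,\ldots,Q_d$ and $v\in M_k\setminus S$, it suffices to produce a bound $\lambda_{E,v}(\psi(\rho(Q)))\le c_v'$ with constants $c_v'$ vanishing for all but finitely many $v$, since $\psi(\rho(Q))$ is already $k$-rational and so the ``all embeddings'' clause of Definition \ref{intdef} is automatic for the target point. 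First I would record that $\psi(\rho(Q))$ avoids $\Supp E$: each $P_i$ is $k$-rational while every conjugate of $Q$ has degree $d$ over $k$, so no $Q_j$ equals any $P_i$.

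The first key step is a pullback computation on the symmetric power. Set-theoretically $\psi^{-1}(\Supp E_i)=\bigcup_{j=1}^d\pi_j^{-1}(P_i)$, since a point $(Q_1,\ldots,Q_d)$ lies over $E_i$ exactly when the divisor $Q_1+\cdots+Q_d$ contains $P_i$, i.e.\ some $Q_j=P_i$. As $\psi$ is the quotient by $S_d$ and is unramified in codimension one away from the diagonal, the $S_d$-symmetry forces each prime divisor $\pi_j^*P_i$ to occur with the same multiplicity, so $\psi^*E_i=m_i\sum_{j=1}^d\pi_j^*P_i$ for some positive integer $m_i$. Functoriality of Weil functions applied to $\psi$ and to each $\pi_j$ then yields
\begin{align*}
\lambda_{E_i,v}(\psi(\rho(Q)))=\lambda_{\psi^*E_i,v}(\rho(Q))+O(1)=m_i\sum_{j=1}^d\lambda_{P_i,v}(Q_j)+O(1),
\end{align*}
where the $O(1)$ is a globally bounded function which, for a suitable choice of global Weil functions, may be taken to vanish outside a finite set of places. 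Summing over $i$ reduces the problem to bounding $\sum_{i,j}\lambda_{P_i,v}(Q_j)$.

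The heart of the argument, and the step requiring the most care, is converting the hypothesis that $R$ is a set of $(D,S)$-integral points --- a statement about the single degree-$d$ point $Q$ quantified over all embeddings $\kbar\hookrightarrow\kbar_v$ --- into simultaneous control of the full Galois orbit $\{Q_j\}$. Writing $Q_j=\tau_jQ$ with $\tau_j\in\Gal(\kbar/k)$ and using that $D$ is defined over $k$, the chosen Weil function satisfies $\lambda_{D,v}(\tau_jQ)=\lambda_{D,v}(Q)$ once the fixed embedding $\iota$ is precomposed with $\tau_j$; since the integrality bound $\lambda_{D,v}(Q)\le c_v$ holds over \emph{all} embeddings, it follows that $\lambda_{D,v}(Q_j)\le c_v$ for every $j$ at the fixed $\iota$. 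Because $P_i\le D$ gives $\lambda_{P_i,v}(Q_j)\le\lambda_{D,v}(Q_j)+O(1)\le c_v+O(1)$, combining with the displayed identity produces $\lambda_{E,v}(\psi(\rho(Q)))\le \bigl(d\sum_i m_i\bigr)c_v+O(1)$, the desired bound after relabeling constants. The main obstacle is precisely this Galois-descent bookkeeping: one must check that the ``all embeddings'' formulation of integrality in Definition \ref{intdef} is exactly what controls the whole conjugate set, and that the multiplicities $m_i$ together with the $O(1)$ functoriality terms can be absorbed into new constants $c_v'$ that still vanish for all but finitely many $v$.
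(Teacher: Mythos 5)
Your proposal is correct and follows essentially the same route as the paper: functoriality of Weil functions through $\psi$ and the projections $\pi_j$ to convert $\lambda_{E,v}$ at $\psi(\rho(Q))$ into $\sum_{j=1}^d\lambda_{D,v}(Q_j)$, then the ``all embeddings'' clause of Definition \ref{intdef} to bound every conjugate simultaneously. The only differences are cosmetic: the paper pins down the identity $\psi^*\psi_*\pi_1^*P_i=\sum_{j=1}^d\pi_j^*P_i$ exactly where you keep an undetermined multiplicity $m_i$ (harmless, since it only rescales the constants), and you spell out the Galois/embedding bookkeeping that the paper dismisses as immediate from the definitions.
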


\begin{proof}
For all $Q\in R$, all $v\in M_k\setminus S$, and all embeddings of $\kbar$ in $\kbar_v$, we have, up to a function bounded by a constant $c_v$ as in Definition \ref{intdef},
\begin{align*}
\lambda_{E,v}(\psi(\rho(Q)))&=\sum_{i=1}^q \lambda_{E_i,v}(\psi(\rho(Q)))=\sum_{i=1}^q \lambda_{\psi_*\pi_1^*(P_i),v}(\psi(\rho(Q)))=\sum_{i=1}^q \lambda_{\psi^*\psi_*\pi_1^*(P_i),v}(\rho(Q))\\
&=\sum_{i=1}^q \lambda_{\sum_{j=1}^d \pi_j^*(P_i),v}(\rho(Q))=\sum_{i=1}^q \sum_{j=1}^d \lambda_{\pi_j^*(P_i),v}(\rho(Q))\\
&=\sum_{i=1}^q \sum_{j=1}^d \lambda_{P_i,v}(\pi_j(\rho(Q)))=\sum_{i=1}^q \sum_{j=1}^d \lambda_{P_i,v}(Q_j)\\
&=\sum_{j=1}^d \lambda_{D,v}(Q_j),
\end{align*}
where $Q_1,\ldots, Q_d$ are the $d$ conjugates of $Q$ over $k$.  Since $R$ is a set of $(D,S)$-integral points, it follows immediately from the definitions that $\psi(\rho(R))$ is a set of $(E,S)$-integral points.  The $k$-rationality of $\psi(\rho(R))$ was noted earlier.
\end{proof}

\begin{lemma}
\label{divlem}
The divisors $E_i$ of Lemma \ref{intlem} are all ample and algebraically equivalent.
\end{lemma}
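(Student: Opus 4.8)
The plan is to prove the two assertions separately: ampleness by descent along the finite quotient map $\psi\colon C^d\to\Sym^d(C)$, and algebraic equivalence by the functoriality of pullback and pushforward together with the fact that points on a connected curve are algebraically equivalent. Both reductions funnel the question back to elementary facts about the product $C^d$, where the divisors in question are transparent.

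For the ampleness, I would first invoke the standard fact that for a finite surjective morphism $f\colon X\to Y$ of projective varieties and a line bundle $L$ on $Y$, $L$ is ample if and only if $f^*L$ is ample. Since $\psi$ is finite and surjective (being the quotient of $C^d$ by the symmetric group), it suffices to show that $\psi^*E_i$ is ample on $C^d$. The computation already carried out in the proof of Lemma \ref{intlem} gives $\psi^*E_i=\sum_{j=1}^d\pi_j^*P_i$, up to a positive integer multiple that is irrelevant for ampleness. Now $\sum_{j=1}^d\pi_j^*P_i$ is the \emph{external sum}, over the $d$ factors of $C^d$, of the divisor $P_i$ on $C$; since $P_i$ has positive degree it is ample on the curve $C$, and the external sum of ample divisors from the individual factors of a product is ample on the product. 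Hence $\psi^*E_i$ is ample, and the descent criterion gives that $E_i$ is ample.

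For the algebraic equivalence, I would note that any two of the points $P_i,P_{i'}$ on the connected nonsingular projective curve $C$ have the same degree, and are therefore algebraically equivalent as divisors on $C$. Pullback under the projection $\pi_1$ preserves algebraic equivalence, so $\pi_1^*P_i$ and $\pi_1^*P_{i'}$ are algebraically equivalent on $C^d$; proper pushforward under the finite (hence proper) morphism $\psi$ likewise preserves algebraic equivalence, so $E_i=\psi_*\pi_1^*P_i$ and $E_{i'}=\psi_*\pi_1^*P_{i'}$ are algebraically equivalent on $\Sym^d(C)$.

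The lemma is essentially formal once these tools are assembled, so there is no deep obstacle; the genuine inputs are the pullback identity $\psi^*E_i=\sum_j\pi_j^*P_i$ from Lemma \ref{intlem} together with the two standard geometric facts (descent of ampleness along finite surjective morphisms, and ampleness of external sums on a product). The points requiring the most care are verifying that the ampleness descent genuinely applies, namely that $\psi$ is finite and surjective onto the normal (indeed smooth) variety $\Sym^d(C)$, and confirming that the possible positive-integer discrepancy between $\psi^*\psi_*\pi_1^*P_i$ and $\sum_j\pi_j^*P_i$ affects neither conclusion, which it does not, since both ampleness and algebraic equivalence are insensitive to multiplication by a common positive integer.
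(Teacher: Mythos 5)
Your proof is correct and takes essentially the same route as the paper's: ampleness of $E_i$ is deduced, via descent along the finite morphism $\psi$, from the ampleness of $\psi^*E_i=\sum_{j=1}^d\pi_j^*P_i$, and algebraic equivalence follows because any two points on a curve are algebraically equivalent and this is preserved by flat pullback and proper pushforward. Your extra remark that a positive-integer multiple discrepancy between $\psi^*\psi_*\pi_1^*P_i$ and $\sum_{j=1}^d\pi_j^*P_i$ is harmless is a careful touch the paper does not spell out, but it changes nothing substantive.
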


\begin{proof}
Since $\psi$ is a finite morphism, each divisor $E_i$ is ample if and only if $\psi^*E_i$ is ample.  Then the ampleness of $E_i$ follows from the ampleness of $\psi^*E_i=\sum_{j=1}^d \pi_j^*P_i$.  Since any two points on a curve are algebraically equivalent and algebraic equivalence is preserved by flat pull-back and proper push-forward, it is immediate that the divisors $E_i$ are all algebraically equivalent.
\end{proof}

In order to effectively apply Lemma \ref{salem}, we need to exert some control over the term $\rk\{D_i|_W\}_{i=1}^q$ in the lemma.  This will be accomplished by the next lemma, which is a generalization of the classical fact that the intersection of $r$ ample effective divisors with an $r$-dimensional subvariety is nontrivial.  The proof uses an idea of Noguchi and Winkelmann from \cite[Lemma 7.3.2]{NW2}.  For convenience, we will frequently identify an effective divisor with its support.

\begin{lemma}
\label{ark}
Let $D_1,\ldots, D_q$ be ample effective Cartier divisors on a projective variety $X$.  Let $r=\rk\{D_i\}_{i=1}^q$ be the rank of the subgroup of $\Div(X)$ generated by $D_1,\ldots, D_q$.  Then for any subvariety $W\subset X$ with $\dim W\geq r$, $ \cap_{i=1}^q D_i\cap W\neq \emptyset$.  In particular, if $\rk\{D_i\}_{i=1}^q\leq \dim X$, then $\cap_{i=1}^q D_i\neq \emptyset$.
\end{lemma}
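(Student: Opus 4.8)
The plan is to reduce the statement to an ``intersection of supports'' problem on a single variety and then to prove it by cutting down dimension one divisor at a time, using ampleness to keep the slices nonempty and the \emph{free abelian} structure of $\Div(X)$ to bound how many of the divisors can genuinely cut.

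First I would make two reductions. If $W\subseteq\Supp D_i$ for some $i$, that $D_i$ contributes all of $W$ to the intersection and may be discarded; discarding divisors only lowers $\rk\{D_i\}$, so the hypothesis $\dim W\geq r$ is preserved, and I may assume $W\not\subseteq\Supp D_i$ for every $i$. Restricting each $D_i$ to $W$ then gives ample effective Cartier divisors $D_i|_W$ on $W$, and since every relation $\sum_i c_i D_i=0$ in $\Div(X)$ restricts to a relation $\sum_i c_i(D_i|_W)=0$ in $\Div(W)$, we have $\rk\{D_i|_W\}\leq\rk\{D_i\}=r$. Replacing $X$ by $W$, it suffices to prove: if $X$ is irreducible and projective and $D_1,\dots,D_q$ are ample effective divisors with $\rk\{D_i\}=r\leq\dim X$, then $\bigcap_{i=1}^q\Supp D_i\neq\emptyset$.

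Next I would run a greedy slicing. Set $Z_0=X$, and having built an irreducible $Z_{i-1}$, put $Z_i=Z_{i-1}$ if $Z_{i-1}\subseteq\Supp D_i$, and otherwise let $Z_i$ be an irreducible component of $Z_{i-1}\cap\Supp D_i$; call step $i$ a \emph{drop} when $Z_{i-1}\not\subseteq\Supp D_i$. Since $D_i$ is ample, at a drop with $\dim Z_{i-1}\geq 1$ the slice $Z_{i-1}\cap\Supp D_i$ is nonempty of pure codimension $1$ in $Z_{i-1}$, so $\dim Z_i=\dim Z_{i-1}-1$, while at a non-drop the dimension is unchanged. By construction $Z_q\subseteq\Supp D_i$ for every $i$, hence $Z_q\subseteq\bigcap_i\Supp D_i$, and $\dim Z_q=\dim X-(\#\text{drops})$. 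Thus the lemma reduces to the inequality $\#\text{drops}\leq\dim X$, for which it suffices to show $\#\text{drops}\leq r$ (as $r\leq\dim X$); this bound also guarantees inductively that one never needs to slice a $0$-dimensional $Z_{i-1}$.

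The crux, and the main obstacle, is bounding the number of drops. Let $B_1,\dots,B_m$ be the divisors used at the $m$ drops, in order; the slicing yields a strictly decreasing chain in which, for $\ell<j$, the member $Z_{i_j-1}$ satisfies $Z_{i_j-1}\subseteq\Supp B_\ell$, while the drop at $B_j$ records exactly $Z_{i_j-1}\not\subseteq\Supp B_j$. I would prove $m\leq r$ by showing that $B_1,\dots,B_m$ are linearly independent in $\Div(X)$. This is precisely where the hypothesis that $\rk$ is taken in $\Div(X)$ — the free abelian group on prime divisors, with \emph{no} equivalence imposed — is essential, and where I would import the idea of Noguchi and Winkelmann from \cite[Lemma 7.3.2]{NW2}: a relation $\sum_j c_j B_j=0$ forces, prime component by prime component, the support of each $B_j$ to be covered by the remaining ones, and reading such a relation against the irreducible chain members $Z_{i_j-1}$ together with the effectivity and positivity coming from ampleness should contradict the defining property of a drop. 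Reconciling the set-theoretic support containments produced by the relation with the numerical positivity produced by ampleness is the delicate point; once the independence is in place, $\#\text{drops}=m\leq\rk\{B_j\}\leq r\leq\dim X$, so $Z_q\neq\emptyset$ and $\bigcap_i\Supp D_i\neq\emptyset$. Specializing to $W=X$ then yields the concluding ``in particular'' clause.
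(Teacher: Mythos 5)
Your reductions and your slicing construction are sound, and in fact they mirror the paper's own proof: the paper runs the same chain argument in contrapositive form (assuming $\bigcap_i D_i=\emptyset$, it extracts $\dim X+1$ of the divisors that are independent in $\Div(X)$), and then deduces the statement for general $W$ by restriction exactly as you do. But your write-up has a genuine gap at precisely the step that carries the mathematical content of the lemma: you never prove that the drop divisors $B_1,\dots,B_m$ are linearly independent in $\Div(X)$. You assemble the correct data --- $Z_{i_j-1}\subseteq\Supp B_\ell$ for $\ell<j$, while $Z_{i_j-1}\not\subseteq\Supp B_j$ --- and then assert that reading a relation against this chain ``should contradict the defining property of a drop,'' explicitly deferring the verification as ``the delicate point.'' Everything before that sentence is routine, and everything after it is conditional on it, so as it stands the proposal is an unproven reduction, not a proof.

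The good news is that your setup does support the missing argument, and it is short; it is exactly the paper's key computation, transplanted into your notation. Suppose $\sum_{j=1}^m c_jB_j=0$ in $\Div(X)$ with not all $c_j$ zero, and let $j_0$ be the \emph{least} index with $c_{j_0}\neq 0$. The chain member $Z_{i_{j_0}}$ is irreducible and lies in $\Supp B_{j_0}$, hence lies in some prime component $E$ of $B_{j_0}$. For every $j>j_0$ we have $Z_{i_j-1}\subseteq Z_{i_{j_0}}\subseteq E$ and $Z_{i_j-1}\not\subseteq\Supp B_j$, whence $E\not\subseteq\Supp B_j$; so $E$ occurs with multiplicity zero in each $B_j$ with $j>j_0$, and with positive multiplicity in $B_{j_0}$ (here effectivity is used). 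Passing from the Cartier relation to the associated cycles, the coefficient of $E$ in $\sum_j c_jB_j$ equals $c_{j_0}$ times the multiplicity of $E$ in $B_{j_0}$, which is nonzero --- a contradiction. Two remarks: the one idea you were missing is to choose $E$ as a component of the \emph{first} divisor appearing with nonzero coefficient, containing the current chain member; and your instinct that the difficulty lies in reconciling the support containments with ``the numerical positivity produced by ampleness'' misdiagnoses the issue, since ampleness plays no role whatsoever in the independence. Ampleness is needed only where you correctly used it: to guarantee that each drop is nonempty of pure codimension one.
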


\begin{proof}
We first prove the contrapositive of the last statement in the theorem.  Let $n=\dim X$.  Suppose that $\cap_{i=1}^q D_i=\emptyset$.  Then we claim that $\rk\{D_i\}_{i=1}^q> n$.  Let $i_1=1$ and let $E_1$ be an irreducible component of $D_1=D_{i_1}$.  Note that $\dim E_1=n-1$.  Since $\cap_{i=1}^q D_i=\emptyset$, there exists a divisor $D_{i_2}\in \{D_1,\ldots, D_q\}$ such that $E_1\cap D_{i_2}\neq E_1$.  Let $E_2$ be an irreducible component of $E_1\cap D_{i_2}$.  Since $D_{i_2}$ is ample, $E_1\cap D_{i_2}$ has pure dimension $n-2$ and $\dim E_2=n-2$.  Continuing inductively, there exist distinct indices $i_1,\ldots, i_{n+1}$ and subvarieties $E_1,\ldots, E_n$ of $X$ such that $E_{m+1}\subset E_m$, $m=1,\ldots, n-1$, $E_m$ is an irreducible component of $\cap_{j=1}^m D_{i_j}$, $\dim E_m=n-m$, and $E_m\cap D_{i_{m+1}}\neq E_m$ for $m=1,\ldots, n$.  Then we claim that the divisors $D_{i_1},\ldots, D_{i_{n+1}}$ are independent in $\Div(X)$.  Suppose that for some $m<n+1$,
\begin{align}
\label{divr}
\sum_{j=m}^{n+1}a_jD_{i_j}=0
\end{align}
with $a_j\in\mathbb{Z}$ and $a_m\neq 0$.  Let $E$ be an irreducible component of $D_{i_m}$ containing $E_m$.  Since $E_m\not\subset D_{i_j}$, $m+1\leq j\leq n+1$, we have $E\not\subset D_{i_j}$, $m+1\leq j\leq n+1$.  Then considering the irreducible component $E$, clearly \eqref{divr} is impossible and we conclude that $\rk\{D_i\}_{i=1}^q> n$.

Now suppose that $\rk\{D_i\}_{i=1}^q=r$ and let $W\subset X$ be a subvariety with $\dim W\geq r$.  Reindex the divisors $D_i$ so that for some integer $q'$, $W\not\subset D_i$, $i=1,\ldots, q'$, and $W\subset D_i$, $i=q'+1,\ldots, q$.  Let $E_i=D_i|_W$, $i=1,\ldots, q'$.  Then $\rk\{E_i\}_{i=1}^{q'}\leq \rk\{D_i\}_{i=1}^q\leq r\leq \dim W$.  Since the divisors $E_i$ are ample, from the case proved above
\begin{align*}
\bigcap_{i=1}^{q'} E_i=\bigcap_{i=1}^q D_i\cap W\neq \emptyset.
\end{align*}

\end{proof}

We now prove Theorem \ref{mtheorem3}.

\begin{proof}[Proof of Theorem \ref{mtheorem3}]
Let $R\subset\{P\in C(\kbar)\mid [k(P):k]\leq d\}$ be an infinite set of $(D,S)$-integral points on $C$.  We may reduce to considering the case where every point of $R$ has exact degree $d$ over $k$, that is, $R\subset C(k,d)$.  Let $R'=\psi(\rho(R))$, where $\psi$ and $\rho$ are the maps introduced before Lemma~\ref{intlem}.  Then by Lemma \ref{intlem}, $R'$ is a set of $k$-rational $(E,S)$-integral points on $\Sym^d(C)$, where $E=\sum_{i=1}^qE_i$ and $E_i=\psi_*(P_i\times C^{d-1})$.  Since $\psi\circ \rho$ is at most $d$-to-$1$, $R'$ is an infinite set.  Let $W$ be a positive-dimensional irreducible component of the Zariski closure of $R'$.  Since $W\subset \Sym^d(C)$ contains a Zariski dense set of $k$-rational points, $W$ is necessarily defined over $k$.  Let $F_i=E_i|_W$, $i=1,\ldots, q$, and let $R''=R'\cap W$.

Fix a $k$-rational divisor $D_0$ of degree $d$ on $C$ (which always exists since $C(k,d)$ is infinite by assumption).  We will frequently identify points of $\Sym^d(C)$ with effective divisors of degree $d$ on $C$.  We will make use of the Abel-Jacobi map 
\begin{align*}
f:\Sym^d(C)&\to \Jac(C),\\
P_1+\cdots +P_d&\mapsto [P_1+\cdots +P_d-D_0],
\end{align*}
where the brackets denote the linear equivalence class of a divisor on $C$.  Let $X=f(W)$.  The proof will be divided into the two cases $\dim X=0$ and $\dim X>0$.

First suppose that $X$ is a point.  This implies that $W$ is contained in a complete linear system $V=|D'|\subset \Sym^d(C)$ for some effective divisor $D'$ of degree $d$, where $f(X)=\{[D'-D_0]\}$.  Let $H_i=E_i|_V$, $i=1,\ldots, q$, and let $\mathcal{H}=\{H_1,\ldots, H_q\}$.  Then $R''$ is a set of $(\sum_{i=1}^q H_i,S)$-integral points on $V$.  Let $\dim |D'|=r$.  Since $\dim W>0$, we have $r>0$.  We may identify $V=|D'|$ with $r$-dimensional projective space $\mathbb{P}^r$.  Then each divisor $H_i$ is a hyperplane in $V$ and corresponds to the linear system $P_i+|D'-P_i|$.

By repeated application of Lemma \ref{hyplem}, it follows that there exists a finite union $Z$ of positive-dimensional linear subspaces $L\subset V$ over $k$ such that 
\renewcommand{\theenumi}{\arabic{enumi}}
\begin{enumerate}
\item $R''\setminus Z$ is a finite set.
\item For each linear subspace $L$ in the finite union, $\mathcal{H}|_L$ consists of $\leq \dim L+1$ hyperplanes of $L$ in general position.
\end{enumerate}
Let $L\subset V$ be an $n$-dimensional linear space over $k$, $n>0$, such that $\mathcal{H}|_L$ consists of $m\leq n+1$ hyperplanes $H_1',\ldots, H_m'$ of $L$ in general position.  Let
\begin{align*}
I_1&=\{i\in \{1,\ldots, q\}\mid H_i|_L=H_{j}', j\in \{1,\ldots, m-1\}\},\\
I_2&=\{i\in \{1,\ldots, q\}\mid H_i|_L=H_{m}'\}.
\end{align*}

Since the hyperplanes $H_1',\ldots, H_m'$ are defined over $k$, in general position, and $m-1\leq n$, the intersection $\cap_{i=1}^{m-1} H_i'=\cap_{i\in I_1}H_i\cap L$ is nonempty and contains a $k$-rational point $P_1'$.  Let $P'\in R''\cap L$.  Let $L'$ be the line through $P'$ and $P_1'$ and let $\{P_2'\}=L'\cap H_m'$.  If the points $P_1',P_2'\in \Sym^d(C)(k)$ correspond to the effective $k$-rational divisors $D_1'$ and $D_2'$, respectively, then $L'$ corresponds to a one-dimensional linear system associated to a map $\phi:C\to\mathbb{P}^1$ with $\dv(\phi)=D'_1-D'_2$.  From the definitions, $\sum_{i\in I_1}P_i\leq D_1'$ and $\sum_{i\in I_2}P_i\leq D_2'$.  Since $L'$ isn't contained in any of the hyperplanes $H_i'$, no $P_i$ is contained in the support of both $D_1'$ and $D_2'$.  Then $\phi$ has the property that $\deg\phi\leq d$ and $\phi(\{P_1,\ldots, P_q\})\subset \{0,\infty\}$.  Moreover, since $P'\in L'(k)$, it follows that $(\psi\circ \rho)^{-1}(P')\subset\phi^{-1}(k)$.  Therefore, $(\psi\circ \rho)^{-1}(R'')\setminus Z(D,d,k)$ is a finite set.

Before considering the case $\dim X>0$, we prove part \eqref{mii}.  If $C=\mathbb{P}^1$, then $\Jac(C)$ is a point and $X=f(W)$ is a point for any $W\subset \Sym^d (C)$.  It then follows from the above case that $R\setminus Z(D,d,k)$ is a finite set.

Suppose now that $q>d$.  Then we will show that $f(W)$ is a point.  If $q>d$, then $\cap_{i=1}^q \Supp E_i=\emptyset$.  It follows that $\cap_{i=1}^q\Supp F_i=\emptyset$.  We now show that if $\cap_{i=1}^q \Supp F_i=\emptyset$, then $f(W)$ is a point.  Let $r=\rk\{F_i\}_{i=1}^q$ be the rank of the subgroup of $\Div(W)$ generated by the divisors $F_1,\ldots, F_q$.  By Lemma \ref{divlem}, all of the divisors $E_i$ are algebraically equivalent and so $\rk \{c_1(E_i)\}_{i=1}^q=1$.  Then since $R''$ is a Zariski dense set of $k$-rational $(\sum_{i=1}^q F_i,S)$-integral points on $W$, by Lemma~\ref{salem}, $r+q(W)\leq \dim W+1$.  Therefore,
\begin{align}
\label{qeq}
q(W)\leq \dim W+1-r.
\end{align}
Now since $\cap_{i=1}^q \Supp F_i=\emptyset$ and each divisor $F_i$ is ample, by Lemma \ref{ark} we have $r\geq \dim W+1$.  Therefore, by \eqref{qeq} we must have $q(W)=0$.  It follows that $W$ does not admit a nonconstant map to an abelian variety and $f(W)$ consists of a single point in $\Jac(C)$.  Therefore, from the case proved above, if $q>d$ then $R\setminus Z(D,d,k)$ is a finite set.

Suppose now that $\dim X>0$.  Then from the previous argument, we must have that $\cap_{i=1}^q \Supp F_i$ is nonempty. Let $Y=f(\cap_{i=1}^q \Supp F_i)$.  Suppose first that for some $P\in Y(\kbar)$, the fiber $f^{-1}(P)$ is not contained in the support of $E$.  Let $D'\in \cap_{i=1}^q (\Supp F_i)(\kbar)$ with $f(D')=P$.  Then, viewing $D'$ as an effective divisor, we have $\sum_{i=1}^q P_i\leq D'$ and $\deg D'=d$.  Moreover, since $f^{-1}(P)$ is not contained in the support of $E$, none of the $P_i$ are basepoints of the complete linear system $|D'|$.  It follows that there exists a morphism, over $\kbar$, $\phi:C\to\mathbb{P}^1$ with $\phi(\{P_1,\ldots, P_q\})=\{0\}$ and $\deg \phi\leq d$, proving the theorem in this case.  Therefore, we may assume now that $f^{-1}(Y)$ is completely contained in the support of $E$.  

Since the set of $k$-rational points $f(R'')$ is Zariski dense in $X=f(W)\subset \Jac(C)$, by Faltings' Theorem \ref{Falt1}, $X$ must be a translate of an abelian subvariety of $\Jac(C)$.  Suppose that $Y$ has an irreducible component (over $\kbar$) $E'$ of codimension one in $X$.  Without loss of generality, after possibly replacing $k$ by a finite extension, we may assume that $E'$ is defined over $k$.  Since $f^{-1}(Y)\subset \Supp E$, it follows that $f(R'')$ is a set of $(E',S)$-integral points of $X$.  Then by Theorem \ref{Voj2}, $f(R'')$ is not Zariski dense in $X$.  This contradicts our assumption that $R''$ is Zariski dense in $W$.  So we may assume now that every component of $Y$ has codimension at least two in $X$.  We now show that this is impossible.  By \eqref{qeq} and the universal property of the Albanese variety, we have 
\begin{align*}
\dim X=\dim f(W)\leq q(W)\leq \dim W+1-r,
\end{align*}
where $r=\rk\{F_i\}_{i=1}^q$ is the rank of the subgroup of $\Div(W)$ generated by the divisors $F_1,\ldots, F_q$.  Then we have the inequality of relative dimensions $\dim W-\dim X\geq r-1$.  Since every component of $Y$ has codimension at least two in $X$, there exists a curve $C\subset X$ not intersecting $Y$.  Let $C'$ be an irreducible component of $f^{-1}(C)$ with $f(C')=C$.  Since $\dim W-\dim X\geq r-1$ and $\dim C=1$, we have $\dim C'\geq r$.  Now since $\rk\{F_i\}_{i=1}^q=r$, by Lemma \ref{ark}, $\cap_{i=1}^q(\Supp F_i)\cap C'\neq \emptyset$.  This contradicts the fact that $Y\cap C=\emptyset$.

\end{proof}

\section{Some examples and open questions}
\label{sopen}

First, we give an example showing that if $C$ has infinitely many integral points of degree $d$ over a number field $k$, then there may not exist any morphism $\phi$ as in Theorem \ref{mtheorem2} (or Theorem \ref{mtheorem}) that is definable over $k$, even allowing $\mathbb{G}_m$ to be replaced by a twist of $\mathbb{G}_m$ (or $\{0,\infty\}$ by a pair of points conjugate over $k$).

\begin{example}
\label{ex1}
Let $k$ be a number field and let $S$ be a finite set of places of $k$ containing the archimedean places.  Let $P_1\in \mathbb{P}^1(\kbar)$ with $[k(P_1):k]=3$ and let $P_2$ and $P_3$ be the two other points conjugate to $P_1$ over $k$.  Let $D=P_1+P_2+P_3$, a divisor on $\mathbb{P}^1$ defined over $k$.  Then we claim that there exists an infinite set $R$ of $(D,S)$-integral points that are quadratic over $k$.  To accomplish this we will take advantage of the construction in \cite[pp. 99-103]{Lev2}.  Let $P_i=(\alpha_i,1)$, $i=1,2,3$.  Let
\begin{align*}
R'=\{(a,b,c)\in k^3\mid a\alpha_1^2+b\alpha_1+c\in \O_{k(\alpha_1)}^*,a\neq 0\}.
\end{align*}
Let
\begin{align*}
R=\{x\in \kbar\mid ax^2+bx+c=0 \text{ for some }(a,b,c)\in R'\}.
\end{align*}
From \cite[Th. 7]{Lev2}, if we view $R'$ as a subset of $\mathbb{P}^2$ then it is Zariski dense in $\mathbb{P}^2$.  In particular, $R$ is an infinite set.  Furthermore, $R$ is a set of $(D,S)$-integral points.  To show this, it suffices to show that for some nonzero constant $C$, $\frac{C}{x-\alpha_i}\in \O_{k(x,\alpha_i)}$ for $i=1,2,3$ and all $x\in R$.  First, there exists a positive integer $m$ such that for all $(a,b,c)\in R'$, $ma,mb,mc\in \O_k$.  Let $n$ be a positive integer such that $n\alpha_i\in \O_{k(\alpha_i)}$ for $i=1,2,3$.  Let $C=mn$.  Let $x\in R$ be a root of $az^2+bz+c$ with $(a,b,c)\in R'$.  Let $x'$ be the other root of $az^2+bz+c$.  Then
\begin{align*}
\frac{C}{x-\alpha_i}=\frac{amn(x'-\alpha_i)}{a(x-\alpha_i)(x'-\alpha_i)}=\frac{amn(x'-\alpha_i)}{a\alpha_i^2+b\alpha_i+c}.
\end{align*}
From our definitions, $a\alpha_i^2+b\alpha_i+c\in \O_{k(\alpha_i)}^*$.  We also have $amn\alpha_i\in \O_{k(\alpha_i)}$.  Since $amx'$ satisfies $(amx')^2+mb(amx')+(ma)(mc)=0$, we have $amx'\in \O_{k(x')}=\O_{k(x)}$.  Thus, $amn(x'-\alpha_i)\in \O_{k(x,\alpha_i)}$.  So $\frac{C}{x-\alpha_i}\in \O_{k(x,\alpha_i)}$ for $i=1,2,3$ and all $x\in R$.    

So there exists an infinite set $R$ of $(D,S)$-integral points that are quadratic over $k$.  On the other hand, since $P_1$, $P_2$, and $P_3$ are conjugate over $k$, it is immediate that there is no quadratic map $\phi:\mathbb{P}^1\to \mathbb{P}^1$ {\em over $k$} such that $\phi(\{P_1,P_2,P_3\})$ consists of only one or two points.
\end{example}

In the other direction, we note that if there exists a morphism $\phi$, defined over $k$, as in Theorem \ref{mtheorem}, then it may be necessary to enlarge $S$ (or extend $k$) so that there are infinitely many $S$-integral points of degree $d$ or less over $k$ on the affine curve.

\begin{example}
\label{ex2}
Let $d$ be a positive integer.  Let $C$ be a nonsingular projective curve over $\mathbb{Q}$ and let $P_1,\ldots, P_q\in C(\mathbb{Q})$ be distinct points with $q>d$.  Let $S=\{\infty\}$ and let $D=\sum_{i=1}^qP_i$.  Then it is a consequence of Bombieri's version of Runge's method \cite{Bomb} that all sets $R\subset\{P\in C(\Qbar)\mid [\mathbb{Q}(P):\mathbb{Q}]\leq d\}$ of $(D,S)$-integral points of degree $d$ or less over $\mathbb{Q}$ are finite.  Note that if $C=\mathbb{P}^1$ and $q\leq 2d$, then there exist morphisms $\phi:C\to\mathbb{P}^1$ over $\mathbb{Q}$ with $\deg \phi=d$ and $\phi(\{P_1,\ldots, P_q\})\subset\{0,\infty\}$.
\end{example}

In view of Example \ref{ex1}, we raise the following question.

\begin{question}
If the set $\{P\in C(\overline{\O}_{L,S})\mid [L(P):L]\leq d\}$ is infinite, can the morphism $\phi$ of Theorem \ref{mtheorem} always be taken to be defined over $L(P_1,\ldots, P_q)$?
\end{question}

Note that Theorem \ref{mtheorem3}\eqref{mii}, where it is assumed that $P_1,\ldots, P_q$ are $k$-rational, answers this question positively when $q>d$ or $C=\mathbb{P}^1$.  Going further, we may ask whether Theorem \ref{mtheorem3}\eqref{mii} holds in general:

\begin{question}
\label{mainq}
Let $C$ be a nonsingular projective curve defined over a number field $k$.  Let $S$ be a finite set of places of $k$ containing the archimedean places.  Let $D=\sum_{i=1}^q P_i$ be a nontrivial divisor defined over $k$, where $P_i\in C(\kbar)$, $i=1,\ldots, q$, are distinct points.  Let $d$ be a positive integer.  Let $R\subset\{P\in C(\kbar)\mid [k(P):k]\leq d\}$ be a set of $(D,S)$-integral points.  Is the set $R\setminus Z(D,d,k(P_1,\ldots, P_q))$ always finite?
\end{question}

Along the lines of Question \ref{mainq}, we prove the following result.

\begin{theorem}
Under the same hypotheses as Question \ref{mainq}, let
\begin{align*}
Z'(C,d,k)=\bigcup_{\substack{\phi\in k(C)\\\deg \phi\leq d}} \phi^{-1}(k).
\end{align*}
Then the set
\begin{align*}
R\setminus Z'(C,d,k)
\end{align*}
is finite.
\end{theorem}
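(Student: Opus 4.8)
The plan is to run the argument of Theorem~\ref{mtheorem3}, exploiting that membership in $Z'(C,d,k)$ is far cheaper than membership in $Z(D,d,k)$: we are free to use \emph{any} degree $\le d$ morphism over $k$, with no constraint on the images of the $P_i$. The local heart of the matter is the following criterion. Let $Q\in C(k,d)$ and let $[Q]\in\Sym^d(C)(k)$ be the associated $k$-rational effective divisor (the sum of the conjugates of $Q$). I claim $Q\in Z'(C,d,k)$ if and only if $\dim|[Q]|\ge 1$. For the nontrivial direction, the complete linear system $|[Q]|$ is a projective space over $k$; choosing a $k$-rational pencil $\ell\subset|[Q]|$ through $[Q]$ yields a morphism $\phi:C\to\mathbb{P}^1$ over $k$ whose degree equals the degree of the moving part of $\ell$, hence $\le d$. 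Writing $[Q]=B+M_0$ with $B$ the base locus of $\ell$ and $M_0$ a fibre of $\phi$ over a $k$-rational point, one checks that $Q\notin\Supp B$ (otherwise the whole Galois orbit of $Q$, i.e.\ all of $[Q]$, would lie in the $k$-rational divisor $B$, forcing $B=[Q]$ and collapsing $\ell$ to a point); hence $Q\le M_0=\phi^{-1}(\phi(Q))$ and $\phi(Q)\in\mathbb{P}^1(k)$, so $Q\in\phi^{-1}(k)\subset Z'(C,d,k)$. Conversely, $Q\in Z'(C,d,k)$ forces $[Q]$ to be a full fibre of a degree $d$ map over $k$, so $\dim|[Q]|\ge 1$.

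With this criterion I reduce exactly as in Theorem~\ref{mtheorem3}. Discarding finitely many points, I may assume $R\subset C(k,d)$ and set $R'=\psi(\rho(R))$, an infinite set of $k$-rational $(E,S)$-integral points on $\Sym^d(C)$ by Lemma~\ref{intlem}, where $E=\sum_{i=1}^q E_i$. Let $W$ be a positive-dimensional component of the Zariski closure of $R'$ (the zero-dimensional components contribute only finitely many points of $R$); then $W$ is defined over $k$, $R''=R'\cap W$ is Zariski dense, and I put $F_i=E_i|_W$. Let $f:\Sym^d(C)\to\Jac(C)$ be the Abel-Jacobi map and $X=f(W)$. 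Since the fibres of $f$ are precisely the complete linear systems, the criterion shows it suffices to prove that each such $W$ is contained in the moving locus $\Theta=\{P'\in\Sym^d(C)\mid\dim|P'|\ge 1\}$: then every $Q$ with $[Q]\in R''$ lies in $Z'(C,d,k)$, and running over the finitely many components $W$ gives that $R\setminus Z'(C,d,k)$ is finite.

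If $\dim W>\dim X$ (in particular whenever $X$ is a point, which covers $C=\mathbb{P}^1$ and, by the argument of Theorem~\ref{mtheorem3}, the case $q>d$), then by the fibre-dimension theorem every fibre of $f|_W$ has dimension $\ge\dim W-\dim X\ge 1$, so a fortiori $\dim|P'|=\dim f^{-1}(f(P'))\ge 1$ for every $P'\in W$ and $W\subset\Theta$, as desired. There remains the case $\dim W=\dim X>0$, in which $f|_W$ is generically finite and $W\cap\Theta$ is a proper closed subset, so that $R''\setminus\Theta$ would be dense; I must therefore show that no such $W$ can carry a Zariski-dense set of integral points. Mirroring the end of the proof of Theorem~\ref{mtheorem3}, Lemma~\ref{salem} combined with the ampleness and algebraic equivalence of the $E_i$ (Lemma~\ref{divlem}) forces $q(W)=\dim W$ and $\rk\{F_i\}_{i=1}^q=1$, while Faltings' Theorem~\ref{Falt1} makes $X$ a translate of an abelian subvariety of $\Jac(C)$. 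Setting $Y=f(\bigcap_i\Supp F_i)$, the density of the $k$-rational integral set $f(R'')$ is contradicted by Vojta's Theorem~\ref{Voj2} when $Y$ has a codimension-one component in $X$, and by Lemma~\ref{ark} (producing a curve in $X$ disjoint from $Y$ whose preimage in $W$ nonetheless meets $\bigcap_i\Supp F_i$) when every component of $Y$ has codimension $\ge 2$.

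The main obstacle is precisely this last case $\dim W=\dim X>0$, and inside it the subcase where some Abel-Jacobi fibre over $Y$ is not contained in $\Supp E$. In Theorem~\ref{mtheorem3} that subcase is benign: it directly manufactures a morphism $\phi$ with $\phi(\{P_1,\dots,P_q\})\subset\{0\}$, which suffices for conclusion (a) there. For the present statement it is the genuinely delicate point, since such a $\phi$ governs only one special fibre of $f$ and not the generic, rigid divisor parametrized by $W$. The difficulty is arithmetic rather than geometric: integrality against $\sum_i F_i$, supported on the codimension-one locus $\bigcap_i\Supp F_i$, need not descend to integrality against an ample divisor on $X$, so Theorem~\ref{Fal2} cannot be invoked naively to contradict the density of $f(R'')$. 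Closing this gap—ruling out a dense set of integral points on a variety that is generically finite over an abelian variety whose ample ``boundary'' $\sum_i F_i$ fails to account for the full preimage of a divisor on $X$, equivalently forcing the generic member of $W$ to move—is where the real work lies.
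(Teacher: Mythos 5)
Your reduction and your ``moving divisor'' criterion are exactly right and coincide with the first half of the paper's argument: after passing to $R'=\psi(\rho(R))\subset\Sym^d(C)(k)$, a point $Q\in C(k,d)$ lies in $Z'(C,d,k)$ as soon as its orbit divisor $[Q]$ satisfies $\dim|[Q]|\geq 1$ (your pencil argument, using Galois-stability of the base locus, is a correct and more detailed version of the sentence the paper devotes to this), and your treatment of the case $\dim W>\dim f(W)$ is fine. But the remaining case is a genuine gap, and you have diagnosed it accurately yourself: when $W\not\subset\Theta$ (so that $f|_W$ is generically injective and $\dim W=\dim f(W)>0$), the machinery of Theorem \ref{mtheorem3} (Lemma \ref{salem}, Lemma \ref{ark}, Theorems \ref{Falt1} and \ref{Voj2}) yields a contradiction only in the subcase $f^{-1}(Y)\subset\Supp E$; when some linear system lying over $Y$ escapes $\Supp E$, the proof of Theorem \ref{mtheorem3} produces only a morphism over $\kbar$ constrained by the $P_i$, which says nothing about membership of the points of $R''$ in $Z'(C,d,k)$, and no contradiction is available from these tools. (Your parenthetical claim that $W\cap\Theta$ is automatically a proper closed subset of $W$ in this case is also unjustified -- $W$ can lie entirely inside $\Theta$ while $f|_W$ is generically finite -- but that slip is harmless, since $W\subset\Theta$ is exactly what you want; the problem is the possibility $W\not\subset\Theta$.) In effect your argument proves the statement only in situations already essentially covered by Theorem \ref{mtheorem3}\eqref{mii}, and gets stuck precisely on the new content.

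The paper does not close this gap by refining the case analysis; it abandons the $W$/structure-theorem framework entirely for this theorem and substitutes a stronger arithmetic input. Writing $X=\{E''\in\Sym^d(C)(\kbar)\mid \dim|E''|>0\}$ for your $\Theta$, the paper proves directly that $R'\setminus X$ is finite. The tool is Faltings' Diophantine approximation inequality on abelian varieties from \cite{Fal1} (heights relative to closed subschemes, cf.\ \cite{Sil}): with $E'=f(\Supp E)$, which is a proper closed subset of $\Jac(C)$ unless $d>g$ (in which case $X=\Sym^d(C)$ and there is nothing to prove), one has $\sum_{v\in S}\lambda_{E',v}(P)<\epsilon h_A(P)$ for all but finitely many $P\in\Jac(C)(k)\setminus E'$. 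Since $E\subset f^*E'$, and since $f$ is injective off $X$ (so only finitely many points of $R'\setminus X$ map into $E'$ or into any finite exceptional set), functoriality of Weil functions and $(E,S)$-integrality give, for all but finitely many $P\in R'\setminus X$,
\begin{align*}
h_E(P)=\sum_{v\in S}\lambda_{E,v}(P)+O(1)\leq \sum_{v\in S}\lambda_{E',v}(f(P))+O(1)<\epsilon h_{f^*A}(P)+O(1).
\end{align*}
As $E$ is ample, $h_{f^*A}\leq c\,h_E+O(1)$ for some $c>0$, so choosing $\epsilon<1/c$ bounds $h_E$ on $R'\setminus X$, and Northcott finishes. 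This is precisely the ``arithmetic rather than geometric'' statement you identified as missing: it kills a dense integral set on any $W\not\subset X$ in one stroke, without requiring integrality to descend to a divisor on $f(W)$, and it is not derivable from Theorems \ref{Falt1}, \ref{Fal2}, and \ref{Voj2} as you were attempting to use them.
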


\begin{proof}
We may reduce to considering a set of $(D,S)$-integral points $R\subset C(k,d)$.  Let $R'=\psi(\rho(R))$.  Let $X$ be the subset of $\Sym^d (C)$ defined by
\begin{align*}
X=\{E\in \Sym^d(C)(\kbar)\mid \dim |E|>0\}.
\end{align*}
Let $g$ be the genus of $C$.  If $d>g$, then $X=\Sym^d (C)$ by Riemann-Roch.  If $d\leq g$, then $X$ is a proper closed subset of $\Sym^d (C)$.  Let $f:\Sym^d (C)\to \Jac(C)$ and $E$ be as in the proof of Theorem \ref{mtheorem3}.  So $R'$ is a set of $k$-rational $(E,S)$-integral points on $\Sym^d (C)$.  It suffices to show that $R'\setminus X$ is a finite set.  Indeed, if $P_1+\cdots +P_d\in X(k)$, then by the definition of $X$, there exists a rational function $\phi\in k(C)$ with $\deg \phi\leq d$ and $\phi^{-1}(0)\subset\{P_1,\ldots, P_d\}$.  If the points $P_i$ are all conjugate over $k$, then we must have $\phi^{-1}(0)=\{P_1,\ldots, P_d\}\subset Z'(C,d,k)$.  It then follows from the definitions that $R\setminus Z'(C,d,k)$ is finite.

To complete the proof, we now show that $R'\setminus X$ is a finite set.  If $f(\Supp E)=\Jac(C)$, then $d> g$, $X=\Sym^d (C)$, and there is nothing to prove.  Otherwise, let $E'=f(\Supp E)$ with, say, its reduced induced subscheme structure.  By a result of Faltings \cite{Fal1}, for any $\epsilon>0$, 
\begin{align*}
\sum_{v\in S}\lambda_{E',v}(P)<\epsilon h_A(P)
\end{align*}
for all but finitely many points $P\in \Jac(C)(k)\setminus E'$, where $h_A$ is a height associated to any ample divisor $A$ on $\Jac(C)$ (see also \cite{Sil} for the theory of heights associated to closed subschemes).  Note that for any finite set $T\subset \Jac(C)(\kbar)$, the set $f^{-1}(T\cup E')\setminus (X\cup \Supp E)$ consists of only finitely many points.  Then since $E\subset f^* E'$, by functoriality we obtain
\begin{align*}
\sum_{v\in S}\lambda_{E,v}(P)&\leq \sum_{v\in S}\lambda_{f^*E',v}(P)+O(1)=\sum_{v\in S}\lambda_{E',v}(f(P))+O(1)\\
&<\epsilon h_A(f(P))+O(1)\\
&<\epsilon h_{A'}(P)+O(1),
\end{align*}
for all but finitely many points $P\in \Sym^d (C)(k)\setminus (X\cup \Supp E)$, where we let $A'=f^*A$.  Then for all $P\in R'\setminus X$,
\begin{align*}
\sum_{v\in S}\lambda_{E,v}(P)=h_E(P)+O(1)<\epsilon h_{A'}(P)+O(1).
\end{align*}
Since $E$ is an ample divisor, this implies that $R'\setminus X$ is finite.
\end{proof}

\subsection*{Acknowledgments}
The author would like to thank Junjiro Noguchi for helpful comments on an earlier draft.

\bibliography{Bounded}
\end{document}